\theoremstyle{definition}
\newtheorem{Def}{Definition}[section]
\newtheorem{Exa}[Def]{Examples}
\theoremstyle{plain}
\newtheorem{Thm}[Def]{Theorem}
\newtheorem{Lem}[Def]{Lemma}
\newtheorem{Pro}[Def]{Proposition}
\def\Box{{\bf Box}}
\def\IN{\mathbb N}
\def\IR{\mathbb R}
\def\O{\mathcal O}
\def\supp{\textup{supp}}
\def\diam{\textup{diam}}
\def\acton{\curvearrowright}
\author{Jintao Deng, \and Liang Guo,\and Qin Wang,\and Yazhou Zhang}
\title{Coarse embeddings at infinity and generalized expanders at infinity*\footnotetext{ Supported in part by NSFC (No.   11831006, 12171156).}}
\date{}
\begin{document}
	\maketitle
	
	\begin{abstract}
We introduce a notion of {\em coarse embedding at infinity} into Hilbert space for metric spaces, which is a weakening of the notion of fibred coarse embedding and a far generalization of Gromov's concept of coarse embedding. It turns out that a residually finite group admits a coarse embedding into Hilbert space if and only if one (or equivalently, every) box space of the group admits a coarse embedding at infinity into Hilbert space. Moreover, we introduce a concept of {\em generalized expander at infinity} and show that it is an obstruction to coarse embeddability at infinity.
	\end{abstract}
	
	\section{Introduction}
	The concept of coarse embeding was introduced by  M. Gromov \cite{Gromov1993}. In \cite{Yu2000}, G. Yu showed that the coarse Baum--Connes conjecture holds for metric spaces which are coarsely embeddable into Hilbert space. Recall that a metric space $(X,d_X)$ admits a coarse embedding into Hilbert space if there exists a map $f:X\to H$ and  two non-decreasing functions $\rho_-,\rho_+:[0,\infty)\to [0,\infty)$ with $\lim\limits_{t\rightarrow \infty}\rho_-(t)=\infty$ such that for all $x,y\in X$
	$$\rho_-(d(x,y))\leq \|f(x)-f(y)\|\leq \rho_+(d(x,y)).$$
	
	It is well known that expanders are not coarsely embeddable into Hilbert space. In \cite{CWY2013}, X. Chen, Q. Wang and G. Yu introduced a concept of fibred coarse embedding into Hilbert space for metric spaces. It turns out that a large class of expanders admit a fibred coarse embedding into  Hilbert space, and that the maximal coarse Baum--Connes conjecture holds for bounded geometry spaces which admit a fibred coarse embedding into Hilbert space.

	In this paper, we shall introduce a notion of {\em coarse embedding at infinity} into Hilbert space (Definition \ref{CEinf}), which is a weakening of the notion of fibred coarse embedding into Hilbert space. Roughly speaking, a metric space $X$ admits a coarse embedding at infinity if large bounded subsets  of $X$ which are far away towards infinity can be coarsely embedded into Hilbert space within common distortion controlling functions.  We show that 

\begin{Thm} (Theorem \ref{CEiffCEinf})
The box space of a finitely generated residually finite group associated with any filtration admits
	a coarse embedding at infinity into Hilbert space  if and only if the group is coarsely embeddable into Hilbert space.
\end{Thm}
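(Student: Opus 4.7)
The plan is to exploit the fact that in the box space $\Box_{(N_i)}G = \bigsqcup_i G/N_i$, the quotient map $\pi_i : G \to G/N_i$ becomes an isometry on every ball of radius $R$ once $i$ is large enough that no non-trivial element of $N_i$ has word length at most $2R$; this is possible because $\bigcap_i N_i = \{e\}$. Consequently, the ``large bounded subsets far away towards infinity'' in the box space are canonically identified with balls in $G$ itself, and a coarse embedding at infinity of the box space is essentially the same data as a uniform family of coarse embeddings of balls in $G$.

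For the direction $(\Leftarrow)$, suppose $f : G \to H$ is a coarse embedding with controls $\rho_-, \rho_+$. Given $R>0$, choose $i_0(R)$ so large that for every $i \geq i_0(R)$ and every $x \in G$ the quotient $\pi_i$ restricts to an isometry $B_R(x) \to B_R(\pi_i(x))$, and let $\sigma$ denote a local section. The composite $f_{R, \pi_i(x)}^i := f \circ \sigma : B_R(\pi_i(x)) \to H$ then inherits the controls $\rho_-, \rho_+$, and as $R, i, x$ vary one obtains a family of local coarse embeddings with common distortion functions, i.e.\ a coarse embedding at infinity of the box space.

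For the direction $(\Rightarrow)$, suppose the box space admits a coarse embedding at infinity with common controls $\rho_-, \rho_+$. For each $n \in \IN$, pick $i_n$ large enough that $\pi_{i_n}$ is an isometry on $B_n(e) \subset G$ and that the coarse embedding at infinity supplies a map $\varphi_n : B_n(\pi_{i_n}(e)) \to H$ with the given controls. Pulling $\varphi_n$ back through $\pi_{i_n}$ and translating so that the image of $e$ is zero yields $g_n : B_n(e) \subset G \to H$ with
\[
\rho_-(d(x,y)) \leq \|g_n(x) - g_n(y)\| \leq \rho_+(d(x,y)).
\]
Fix a non-principal ultrafilter $\omega$ on $\IN$. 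For each $x \in G$ the sequence $(g_n(x))_{n \geq |x|}$ is bounded by $\rho_+(|x|)$, so
\[
F(x) := [g_n(x)]_\omega
\]
defines a map into the Hilbert-space ultrapower $H^\omega$ satisfying the same two-sided control. Since $G$ is countable, the closed linear span of $F(G)$ is a separable Hilbert space, which embeds isometrically into $H$; the composite is the sought coarse embedding of $G$.

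The main technical point is the careful reading of Definition \ref{CEinf}: one must confirm that the distortion functions produced by the coarse embedding at infinity are genuinely uniform (depending at worst on $R$, not on the center or on $i$), so that they assemble into a single pair of controls in the ultralimit. If the definition only gives controls depending on $R$, a diagonal extraction across $R = n$ delivers the same conclusion; either way, the heart of the argument, the identification of balls at infinity in $\Box G$ with balls in $G$ via the growth of the injectivity radius of the quotients, is unchanged.
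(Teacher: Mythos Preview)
Your argument is correct and rests on the same core observation as the paper: the quotient maps $\pi_i:\Gamma\to\Gamma/\Gamma_i$ are eventually isometries on bounded sets, so balls in $\Gamma$ and balls ``at infinity'' in $\Box(\Gamma)$ are interchangeable. For the direction $(\Leftarrow)$ your proof and the paper's are essentially identical.

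For $(\Rightarrow)$ the two proofs diverge in packaging. The paper invokes the characterization of \cite{DGLY2002}: a locally finite space coarsely embeds into Hilbert space iff there are uniform controls $\rho_\pm$ such that every finite subset embeds with those controls. Given the coarse embedding at infinity, each finite $F\subset\Gamma$ is pushed forward isometrically into some $\Gamma/\Gamma_N$ far enough out to lie outside $K_{\diam(F)}$, and the supplied local embedding is pulled back; this verifies the \cite{DGLY2002} criterion directly. You instead construct the global embedding by hand via an ultralimit of the normalized maps $g_n:B_n(e)\to H$. This is a perfectly good substitute for \cite{DGLY2002} (indeed it is essentially one way to prove that proposition), and has the advantage of being self-contained. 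Your caution about whether $\rho_\pm$ might depend on $R$ is unnecessary: in Definition~\ref{CEinf} the controls are quantified before $R$, so they are genuinely uniform and no diagonal extraction is needed. One small point worth making explicit in your write-up is that $\diam(B_n(\pi_{i_n}(e)))\leq 2n$, so one must place this ball outside $K_{2n}$ rather than $K_n$; this only requires taking $i_n$ larger, which is harmless.
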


In this theorem, the role of Hilbert space can be replaced by any class of metric spaces which is closed under taking direct sums and ultra-products, e.g., the class of $L^p$ spaces with $1\leq p<\infty$, the class of uniformly convex Banach spaces, etc.. This fits into the following spectrum which links analytic properties of a group $\Gamma$ to the  geometric properties of its box spaces $\Box(\Gamma)$.
	\begin{equation*}\begin{split}
	\Gamma\mbox{ is amenable}&\iff\Box(\Gamma)\mbox{ has Yu's Property A,}\\
	\Gamma\mbox{ is a-T-menable}&\iff\Box(\Gamma)\mbox{ admits a fibred coarse embedding into Hilbert space,}\\
	\Gamma\mbox{ has Property(T)}&\iff\Box(\Gamma)\mbox{ has geometric Property(T),}\\
	\Gamma\mbox{ has Property A}&\iff\Box(\Gamma)\mbox{ has Property A at infinity,}\\
	\Gamma\txt{\mbox{ is coarsely embeddable} \\ \mbox{ into Hilbert space}}&\iff\Box(\Gamma)\mbox{ admits a coarse embedding at infinity into Hilbert space} \\
	\Gamma\txt{\mbox{ is coarsely embeddable} \\ \mbox{ into an $L^p$ space}}&\iff\Box(\Gamma)\mbox{ admits a coarse embedding at infinity into an $L^p$ space}
	\end{split}
	\end{equation*}
	where $\Box(\Gamma)$ is the box space of $\Gamma$ associated with a filtration of $\Gamma$.
	
	The first equivalence was due to E. Guentner (refer to \cite{Roe2003}), where Property A is a geometric version of amenability introduced by G. Yu \cite{Yu2000} which guarantees the coarse embeddability into Hilbert space. The second equivalence was proved by X. Chen, Q. Wang and X. Wang \cite{CWW2013} to provide a tool for the study of the maximal Baum--Connes conjecture \cite{CWY2013}. The
	third equivalence is due to R. Willett and G. Yu \cite{RYI2012} on higher index theory for certain expander graphs and Gromov monster groups.  The forth equivalence was proved by T. Pillon \cite{T2018}. We will prove the fifth (Theorem \ref{CEiffCEinf}), and the last conclusion can be proved in a similar way (cf. \cite{Arnt2016}).  Pillon's notion of Property A at infinity \cite{T2018} implies coarse embeddability at infinity into Hilbert space. We also show that the coarse disjoint union of a sequence of group extensions with ``$CE_\infty-by-A_\infty$" structure implies ``$CE_\infty$".

	The obstructions to the coarse embeddability into Hilbert space have been widely studied (c.f.\cite{AT2015, Arzhantseva2019, Gromov03, Tessera, NY2012, Osajda2021}). M. I. Ostrovskii \cite{Ostro09} and R. Tessera \cite{Tessera} independently showed that a metric space X does not coarsely embed into Hilbert space if and only if $X$ contains a sequence of finite subsets of $X$, called generalized expanders. In this paper, we study the obstructions to the coarse embeddability at infinity into Hilbert space.  We introduce a notion of {\em generalized expanders at infinity} which can be characterized by using a Poincar\'{e} inequality, and then prove the following result.
	
\begin{Thm}{(see Theorem 3.2)}
	A  metric space $X$ does not coarsely embed at infinity into Hilbert space  if and only if $X$ coarsely contains a generalized expanders at infinity.
\end{Thm}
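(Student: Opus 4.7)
The plan is to adapt the Hahn--Banach/Poincar\'e-inequality duality approach of Ostrovskii and Tessera to the ``at infinity'' setting, with the additional bookkeeping forced by the requirement that the obstructing finite subsets escape every bounded region of $X$.

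For the easy direction $(\Leftarrow)$, suppose $X$ coarsely contains a generalized expander at infinity, i.e.\ a sequence of finite subsets $F_n \subset X$ escaping every bounded set, with $\diam(F_n) \to \infty$, equipped with probability measures $\mu_n$ on $F_n \times F_n$ concentrated on pairs of distance comparable to $\diam(F_n)$ and satisfying a Poincar\'e inequality of the form
\[
\sum_{x,y \in F_n} \mu_n(x,y)\, \|f(x)-f(y)\|^2 \;\leq\; C(R)\sum_{\substack{x,y \in F_n\\ d(x,y)\leq R}} \|f(x)-f(y)\|^2
\]
for every $R>0$ and every $f\colon F_n \to \H$. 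Assume for contradiction that $X$ admits a coarse embedding at infinity with control functions $\rho_\pm$. By definition, for each $R$ there is a bounded set $B_R$ outside of which the two-sided estimate holds; since $F_n$ eventually lies outside $B_R$, restricting the embedding to $F_n$ and plugging into the Poincar\'e inequality yields $\rho_-(\diam(F_n))^2 \lesssim C(R)\,\rho_+(R)^2$, contradicting $\rho_-\to\infty$.

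For the hard direction $(\Rightarrow)$, I would argue by contrapositive. Negating the definition of coarse embedding at infinity gives: for every pair of admissible $(\rho_-,\rho_+)$ and every bounded set $B\subset X$, there exists a finite subset $F\subset X\setminus B$ on which no Hilbert-valued map satisfies the two-sided estimate at the relevant scale. Run a diagonal scheme enumerating a universal cofinal family of control pairs while letting $B$ exhaust $X$; this produces a sequence of finite subsets $F_n$ going to infinity with $\diam(F_n)\to\infty$ on which Hilbert-valued embeddings fail an increasingly stringent family of two-sided estimates. To upgrade this qualitative failure to a quantitative Poincar\'e inequality, I would apply Hahn--Banach in the finite-dimensional convex cone generated, inside $\IR^{F_n\times F_n}$, by vectors of pairwise squared distances $(\|f(x)-f(y)\|^2)_{x,y}$ of maps $f\colon F_n\to\H$. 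Separation of this cone from the vector dictated by the lower-control function $\rho_-$ yields nonnegative coefficients which, after normalization, constitute the measure $\mu_n$ and the constants $C(R)$. The required concentration of $\mu_n$ near pairs of distance $\diam(F_n)$ is then read off from the scale at which the qualitative failure was detected in the diagonal construction.

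The main obstacle is precisely the diagonalization in the hard direction. In the classical Ostrovskii--Tessera theorem the failure of coarse embedding is a single uniform quantitative statement, while here it has to be witnessed by finite subsets that simultaneously escape every bounded region of $X$ and carry a Poincar\'e inequality of uniform type; the enumeration of control-function pairs and of exhausting bounded sets must be interleaved so that the separating-functional limit still produces a single $\mu_n$ controlling \emph{all} short-range scales $R$. A secondary technical point is that ``coarsely contains'' presumably allows a coarse equivalence rather than isometric inclusion, so the Poincar\'e inequality together with its concentration hypothesis must be transported through the coarse map with the standard additive bounded-distortion correction; in the bounded-geometry setting this is routine but must be checked carefully on both sides of the equivalence.
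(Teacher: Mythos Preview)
Your overall strategy---adapt the Ostrovskii--Tessera Hahn--Banach separation argument to the ``at infinity'' setting---is exactly the paper's approach. Two points of divergence are worth noting.

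First, the Poincar\'e inequality you write down (comparing a long-range average against a short-range sum with scale-dependent constants $C(R)$) is Tessera's original form, not the paper's Definition~3.1. In the paper a generalized expander at infinity carries, for every bounded $K$ and every $m$, a subset $C_{K,m}\subset X\setminus K$ with $\diam(C_{K,m})\leq r_m$ and a probability measure $\mu$ supported on pairs at distance $>R_m$ such that $\sum_{x,y}\|f(x)-f(y)\|^2\mu(x,y)\leq c$ for every $1$-Lipschitz $f$; the right-hand side is a single absolute constant, not a short-range sum. With that definition the easy direction is a one-liner: $c\geq\rho_-(R_m)^2\to\infty$. Your version is equivalent in spirit but you would have to rederive the paper's form from it, or adjust the target statement.

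Second, and more to the point, the diagonalization you flag as the main obstacle is precisely what the paper's Lemma~3.3 removes. That lemma says: if for every $n$ there is a scale $R_n$ and, for every diameter bound $i\geq R_n$, a bounded set $K_{n,i}$ outside of which every subset of diameter $\leq i$ carries a map to $H$ with upper control $\rho$ and lower bound $n$ at scale $R_n$, then $X$ coarsely embeds at infinity (by forming the weighted sum $\sum\tfrac{1}{n}f^C_{n,r}$). Taking the contrapositive with a single fixed $\rho_+$ (normalized to the identity) yields one constant $N$ that serves as $\sqrt{c}$ uniformly over all $m$ and all bounded $K$; the Hahn--Banach separation is then carried out once for each pair $(K,m)$, between the convex set of admissible two-sided kernels and the convex set of squared-distance kernels of maps into $H$, with no further uniformity to arrange. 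There is no need to enumerate a cofinal family of control pairs, and the interleaving problem you raise disappears.

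Your secondary concern about ``coarsely contains'' is a non-issue here: in the paper the sets $C_{K,m}$ are literal subsets of $X$, so nothing has to be transported through a coarse equivalence.
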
	
	
The well-known concrete obstructions to the coarse embeddability into Hilbert space include expanders, relative expanders \cite{AT2015} and asymptotic expanders \cite{asymptoticexpander}. In this paper, we also introduce a notion of containment of an expander at infinity  to demonstrate a special case of generalized expanders at infinity.

		\begin{Thm}{(see Theorem 3.8)}
		Let $\{X_n\}_{n\in \mathbb{N}}$ be a sequence of connected finite graphs. If $\{X_n\}_{n\in \mathbb{N}}$ contains an expander at infinity, then the coarse disjoint union $X=\sqcup_{n\in \mathbb{N}} X_n$ is a generalized expander at infinity, and thus does not coarsely embed at infinity into Hilbert space.
	\end{Thm}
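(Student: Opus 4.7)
The plan is to exhibit the coarse disjoint union $X = \sqcup_n X_n$ as a generalized expander at infinity, and then to invoke Theorem 3.2 to conclude the non-embeddability statement. By hypothesis, the sequence $\{X_n\}$ contains subgraphs $Y_n \subseteq X_n$ of uniformly bounded degree with $|Y_n|\to\infty$, a uniform positive spectral gap $\lambda > 0$ of the combinatorial Laplacian, and with the property that $d(Y_n, x_0)\to\infty$ for some (hence every) basepoint $x_0\in X$. The task is to repackage this data as the weighted-Poincar\'e data required in the definition of a generalized expander at infinity.

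First I would convert the spectral gap into the classical Poincar\'e inequality
$$\frac{1}{|Y_n|^2}\sum_{x,y\in Y_n}\|f(x)-f(y)\|^2 \;\leq\; \frac{C}{|Y_n|}\sum_{\{x,y\}\in E(Y_n)}\|f(x)-f(y)\|^2,$$
valid for every Hilbert-space valued function $f$ on $Y_n$, with $C$ depending only on $\lambda$ and the uniform degree bound, hence uniform in $n$. Viewing each $Y_n$ as a subset of $X$ through the coarse disjoint union (which restricts to an isometry on each $X_n$) and equipping it with the uniform pair-weights $w_n(x,y) = |Y_n|^{-2}$, I would verify that $(Y_n, w_n)$ constitutes a sequence of weighted finite subsets of $X$ which is pushed to infinity. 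The Poincar\'e inequality above, once rewritten in terms of pair-weights after an edge-to-pair conversion, is then the Poincar\'e inequality characterizing generalized expanders at infinity in the sense defined earlier in the paper. Theorem 3.2 then delivers the non-embeddability at infinity conclusion immediately.

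The main obstacle I anticipate is reconciling the edge-indicator normalization of the spectral Poincar\'e inequality with the distance-weighted normalization appearing in the generalized expander inequality. The standard remedy is a path-averaging argument combined with Cauchy--Schwarz, which converts the edge-sum on the right-hand side into a pair-sum weighted by $d(x,y)^2$, at the cost of a multiplicative constant depending on the diameters of $Y_n$ and on the uniform bounded geometry of $X$. Keeping this constant independent of $n$ is where the uniform degree bound of the expander at infinity is used. A short additional bookkeeping step is needed to match the precise escape-to-infinity requirement in the definition of generalized expander at infinity with the escape property already built into the definition of expander at infinity; but this is essentially formal since both definitions insist that the finite subsets eventually lie outside every bounded neighborhood of a basepoint.
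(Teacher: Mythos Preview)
Your overall strategy (spectral gap $\Rightarrow$ Poincar\'e inequality $\Rightarrow$ generalized expander data) matches the paper's, but there is a genuine gap and a misidentified obstacle.

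\textbf{The real gap: the support condition on $\mu$.} The definition of generalized expander at infinity requires, besides the Poincar\'e-type bound, that the probability measure $\mu$ on $C_{K,m}\times C_{K,m}$ satisfy $\mu(x,y)=0$ whenever $d(x,y)\le R_m$, for some sequence $R_m\to\infty$. Your uniform weights $w_n(x,y)=|Y_n|^{-2}$ do not satisfy this, and you never address it. The paper handles this by a counting step: since the graphs have degree bounded by $k_0$, each vertex has at most $k_0^{R}$ neighbours within distance $R$, so setting $R_m=\log_{k_0}(|C_{N,m}|/2)$ one has
\[
\sum_{d(x,y)\ge R_m}\frac{1}{|C_{N,m}|^2}\;\ge\;\frac12.
\]
Restricting the uniform measure to $\{d(x,y)\ge R_m\}$ and renormalising therefore costs only a factor of $2$ in the Poincar\'e constant, and $R_m\to\infty$ because $|C_{N,m}|\to\infty$. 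This step is essential and is absent from your proposal.

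\textbf{The obstacle you anticipate does not exist.} There is no ``distance-weighted normalisation'' to reconcile: the generalized-expander inequality asks only that $\sum\|f(x)-f(y)\|^2\mu(x,y)\le c$ for every $1$-Lipschitz $f$. Since $\|f(x)-f(y)\|\le 1$ on each edge, the edge sum on the right of your Poincar\'e inequality is at most $k_0|Y_n|$, and hence
\[
\frac{1}{|Y_n|^2}\sum_{x,y}\|f(x)-f(y)\|^2\;\le\;\frac{C}{|Y_n|}\cdot k_0|Y_n|\;=\;Ck_0,
\]
uniformly in $n$. No path-averaging or Cauchy--Schwarz is needed. Worse, the remedy you propose would introduce a constant depending on $\diam(Y_n)$, which is not uniform (since $|Y_n|\to\infty$) and would wreck the argument.

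Finally, a minor point: the definition of ``expander at infinity'' is doubly indexed by $(N,m)$, not a single sequence $Y_n\subset X_n$; you should track both indices so that the escape-to-infinity (via $N$) and the growth of $R_m$ (via $m$) are visibly independent.
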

	

	The paper is organized as follows. In Section 2, we introduce the notion of coarse embedding at infinity into Hilbert space and discuss several cases to which this notion applies, such as box spaces and warped cones. In section 3, we introduce a notion of generalized expanders at infinity, and show that it is an obstruction to coarse embedding at infinity. Morover, we introduce a notion of expanders at infinity as a special case of generalized expander at infinity. In section 4, we introduce the notion of $p$-convex metric and  $(\mathcal{M},p)$-generalized expanders at infinity for a class of metric spaces $\mathcal{M}$.

	\section{Coarse embeddings at infinity}
	
	In this section, we shall first introduce the notion of coarse embeddability at infinity, and then show that this property is preserved under certain group extensions.
	
	Let $X$, $Y$ be metric spaces. A map $f:X\to Y$ is called a \emph{coarse embedding} if there exist non-decreasing functions $\rho_{\pm}:\mathbb{R}_+\to\mathbb{R}_+$ with $\displaystyle \lim_{t\to\infty}\rho_{\pm}(t)=\infty$ such that
	$$\rho_-(d_X(x,y))\leq d_Y(f(x),f(y))\leq\rho_+(d_X(x,y))$$
	for all $x,y\in X$. We say that a metric space $X$ is \emph{coarsely embeddable} if there exists a coarse embedding of $X$ into Hilbert space.
	
	Now we recall the notion of coarse embeddability for a sequence of metric spaces. A sequence of metric spaces $\{X_i\}_{i\in I}$ is said to admit a coarse embedding into a Hilbert space $H$ if there exist non-decreasing functions $\rho_{\pm}:\mathbb{R}_+\to\mathbb{R}_+$ with $\displaystyle \lim_{t\to\infty}\rho_{\pm}(t)=\infty$, such that for each $i\in I$, there exists $f_i:X_i\to H$ satisfying
	$$\rho_-(d_X(x,y))\leq \|f_i(x)-f_i(y)\|\leq\rho_+(d_X(x,y))$$
	for all $x,y\in X_i$.
	
	We now introduce a generalized concept of coarse embeddability.
	\begin{Def}\label{CEinf}
		A metric space $(X,d)$ is said to admit a {\em coarse embedding at infinity} into Hilbert space  if there exist two non-decreasing functions $\rho_+,\rho_-:\mathbb R_+\to\mathbb R_+$ with $\lim\limits_{t\to\infty}\rho_{\pm}(t)=\infty,$ such that for each $R>0$, there exists a bounded subset $K_R\subseteq X$, such that for any bounded subset $C\subset X\backslash K_R$ with $\diam(C)\leq R$, there exists a coarse embedding  $f_C:C\to H$ into a Hilbert space $H$ satisfying
		$$\rho_-(d(x,y))\leq\|f_C(x)-f_C(y)\|\leq \rho_+(d(x,y))$$
		for all $x,y\in C$.
	\end{Def}
	We remark here that one can also consider coarse embedding at infinity into other metric spaces, such as $L^p$-spaces, uniformly convex Banach spaces, CAT($0$) spaces, etc..
	
	Recall that a coarse disjoint union of a sequences of finite space $(X_n,d_n)_{n\in\IN}$ is the disjoint union $X=\bigsqcup_{n\in\IN}X_n$ equipped with a metric $d$ such that the restriction of $d$ to each $X_n$ coincides with the original metric $d_n$ and $d(X_n,X_m)\to\infty$ as $n+m\to\infty$ and $n\ne m$.
	
     Recall that a metric space $\widetilde{Y}$ is a Galois covering of $Y$ if there exists a discrete group $\Gamma$ acting on $\widetilde{Y}$ freely and properly by isometries such that $Y=\widetilde{Y}/\Gamma$. Denote  $\pi:\widetilde{Y}\to Y$ to be the associated covering map. A sequence of Galois coverings $(\widetilde{X}_n)_{n\in\IN}$ of $(X_n)_{n\in\IN}$ is said to be \emph{asymptotic faithful} if for any $r>0$, there exists a integer $N$ such that for all $n\geq N$, the covering map $\pi_n$ is "$r$-isometric", that is, for all subsets $\widetilde{C}\subset \widetilde{X}_n$ of diameter less than $r$, the restriction of $\pi_n$ to $\widetilde{C}$ is an isometry onto $C=\pi_n(\widetilde{C})\subset X_n$.
	
	Let us recall the definition of box spaces of a finitely generated residually finite group.
	
	\begin{Def}
		Let $\Gamma$ be a finitely generated residually finite group. A filtration of $\Gamma$ is a nested sequence of finite index normal groups $\Gamma_1\supseteq\cdots\supseteq\Gamma_i\supseteq\cdots$ of $\Gamma$ such that $\bigcap_{i=1}^{\infty}\Gamma_i=\{e\}$. The box space of $\Gamma$ associated with $\{\Gamma_i\}$ is defined to be the coarse disjoint union $\sqcup_{i=1}^{\infty}\Gamma/\Gamma_i$, denoted by $\Box_{\{\Gamma_i\}}(\Gamma)$, or simply $\Box(\Gamma)$, where each quotient $\Gamma/\Gamma_n$ is endowed with the quotient metric from $\Gamma$.
	\end{Def}
	Box spaces provide a large class of metric spaces which are coarsely embeddable at infinity.
	\begin{Thm}\label{CEiffCEinf}
		Let $\Gamma$ be a finitely generated residually finite group. Then the followings are equivalent.
		\begin{enumerate}[(1)]
			\item The group $\Gamma$ admits a coarse embedding into Hilbert space;
			\item There exists a filtration $\{\Gamma_i\}$ such that $\Box_{\{\Gamma_i\}}(\Gamma)$ is coarsely embeddable at infinity into Hilbert space;
			\item For any filtration $\{\Gamma_i\}$, $\Box_{\{\Gamma_i\}}(\Gamma)$ is coarsely embeddable at infinity into Hilbert space.
		\end{enumerate}
	\end{Thm}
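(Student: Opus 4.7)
The implication $(3)\Rightarrow(2)$ is immediate. The remaining two implications both rely on the standard fact that a box space $\Box_{\{\Gamma_i\}}(\Gamma)$ is \emph{asymptotically faithful}: since $\bigcap_i\Gamma_i=\{e\}$, for every $R>0$ there is $N_R$ such that $\Gamma_n\cap B_\Gamma(e,2R)=\{e\}$ for all $n\geq N_R$, and a short triangle-inequality argument then shows that the covering map $\pi_n\colon\Gamma\to\Gamma/\Gamma_n$ is an isometry on every subset of diameter at most $R$. Enlarging $N_R$ if necessary, one may also assume that the $n$-th and $m$-th pieces satisfy $d(\Gamma/\Gamma_n,\Gamma/\Gamma_m)>R$ whenever $n\neq m$ and $n,m\geq N_R$ (this is possible by definition of the coarse disjoint union).

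For $(1)\Rightarrow(3)$, fix a filtration $\{\Gamma_i\}$ and a coarse embedding $f\colon\Gamma\to H$ with control functions $\rho_\pm$. For each $R>0$, set
\[
K_R\ :=\ \bigsqcup_{n<N_R}\Gamma/\Gamma_n\ \subset\ \Box_{\{\Gamma_i\}}(\Gamma),
\]
which is bounded as a finite union of finite pieces. Any bounded $C\subset\Box_{\{\Gamma_i\}}(\Gamma)\setminus K_R$ with $\diam(C)\leq R$ lies inside a single $\Gamma/\Gamma_n$ with $n\geq N_R$ by the choice of $N_R$. Asymptotic faithfulness lets us lift $C$ isometrically to a subset $\widetilde C\subset\Gamma$ via a local inverse of $\pi_n$, and the composition $f_C=f|_{\widetilde C}\circ\pi_n^{-1}$ is a coarse embedding of $C$ into $H$ with the same $\rho_\pm$ inherited from $f$. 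This verifies Definition \ref{CEinf} with the original $\rho_\pm$.

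The heart of the argument is $(2)\Rightarrow(1)$. Suppose some $\Box_{\{\Gamma_i\}}(\Gamma)$ is coarsely embeddable at infinity, with control functions $\rho_\pm$ and bounded obstructions $K_R$. For each $R$, pick $n=n(R)\geq N_R$ large enough that $\Gamma/\Gamma_n$ is disjoint from $K_R$; then the ball $B_{\Gamma/\Gamma_n}(\bar e,R)$ has diameter at most $2R$, so by hypothesis it admits a coarse embedding $g_R$ into a Hilbert space $H_R$ with controls $\rho_-(2R)(\cdot)$, $\rho_+(2R)(\cdot)$—that is, the uniform $\rho_\pm$ from the definition applied with parameter $2R$. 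Using the isometric lift from asymptotic faithfulness, transfer $g_R$ to a coarse embedding $\tilde g_R\colon B_\Gamma(e,R)\to H_R$, and normalize by subtracting $\tilde g_R(e)$ so $\tilde g_R(e)=0$. Fix a non-principal ultrafilter $\omega$ on $\IN$, and on each $\gamma\in\Gamma$ (which lies in $B_\Gamma(e,R)$ for all large $R$) define
\[
F(\gamma)\ :=\ [\,\tilde g_R(\gamma)\,]_\omega\ \in\ \prod_R H_R\big/\omega.
\]
Because each coordinate obeys $\rho_-(d(\gamma_1,\gamma_2))\leq\|\tilde g_R(\gamma_1)-\tilde g_R(\gamma_2)\|\leq\rho_+(d(\gamma_1,\gamma_2))$ once $R$ is large enough, the same sandwich passes to the ultralimit, and since an ultraproduct of Hilbert spaces is itself a Hilbert space, $F$ is a coarse embedding of $\Gamma$ into Hilbert space.

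The two non-trivial steps are essentially dual uses of asymptotic faithfulness: in $(1)\Rightarrow(3)$ we push a global embedding down to small pieces of each $\Gamma/\Gamma_n$, while in $(2)\Rightarrow(1)$ we lift local embeddings of arbitrarily large balls of $\Gamma/\Gamma_n$ back up to exhausting balls in $\Gamma$. The main obstacle is the last step, namely gluing the local lifts $\tilde g_R$ into a single embedding; the ultraproduct construction is the cleanest way to do this, and its success rests crucially on the Hilbert-space structure being preserved under ultraproducts (which is why the theorem extends to any class of spaces closed under direct sums and ultraproducts, as noted after the statement).
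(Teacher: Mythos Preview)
Your proof is correct, modulo a harmless indexing slip: since the ball $B_{\Gamma/\Gamma_n}(\bar e,R)$ has diameter at most $2R$, you must choose $n(R)$ so that $\Gamma/\Gamma_n$ avoids $K_{2R}$ (not $K_R$), and likewise take $n(R)\geq N_{2R}$ so that the isometric lift applies to sets of that diameter. Your phrasing ``the uniform $\rho_\pm$ applied with parameter $2R$'' shows you are aware of this, but the earlier sentence should be adjusted accordingly.

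The implications $(1)\Rightarrow(3)$ and $(3)\Rightarrow(2)$ match the paper essentially verbatim. For $(2)\Rightarrow(1)$ the paper takes a different route: it invokes the Dranishnikov--Gong--Lafforgue--Yu characterization \cite{DGLY2002}, namely that $\Gamma$ coarsely embeds into Hilbert space iff every finite subset of $\Gamma$ does with uniform control functions. Given a finite $F\subset\Gamma$ with $R=\diam(F)$, the paper picks $N$ large enough that $\pi_N$ is isometric on $F$ and $\pi_N(F)\cap K_R=\emptyset$, and then pulls back the guaranteed local embedding $f_{\pi_N(F)}$ via $\pi_N|_F$. Your ultraproduct construction is essentially an explicit unpacking of what underlies the DGLY lemma; it has the virtue of being self-contained and of making transparent \emph{why} the theorem extends to any class of target spaces closed under ultraproducts (precisely the remark the paper makes immediately after the statement). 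The paper's version, by contrast, is shorter because it outsources the gluing step to a cited result, but it leaves the ultraproduct stability implicit.
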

	
	\begin{proof} (1)$\Rightarrow$(3). Let $f:\Gamma\to H$ be a coarse embedding of $X$ into Hilbert space $\mathcal{H}$. Assume that $\rho_+,\rho_-:\IR_+\to\IR_+$ are non-decreasing functions satisfying $\lim\limits_{t\to\infty}\rho_{\pm}(t)=\infty$ and
		$$\rho_-(d(g,h))\leq\|f(g)-f(h)\|\leq \rho_+(d(g,h))$$
		for any $g,h\in\Gamma$. For any filtration $\{\Gamma_i\}$, choose an element $m_i\in\Gamma_i$ for each $i\in\IN$ satisfying that
		$$l_{\Gamma}(m_i)=\min\{l_{\Gamma}(g)\mid g\in\Gamma_i\}.$$
		The existence of $m_i$ is guaranteed by the condition that $\Gamma_i$ has finite index in $\Gamma$. As $\bigcap_{i=1}^{\infty}\Gamma_i=\{e\}$, the sequence $\{l_{\Gamma}(m_i)\}_{i\in\IN}$ is non-decreasing and we have that $\lim\limits_{i\to\infty}l_{\Gamma}(m_i)=+\infty$.
		
		For any $R>0$, fix an $N\in\IN$ such that $l_{\Gamma}(m_N)>2R$. For all $\widetilde{C}\subset \Gamma$ with $\diam(C)\leq R$, we have the restriction of the quotient homomorphisms $\pi_i:\Gamma\to\Gamma/\Gamma_i$ to $\widetilde{C}$ is an isometry onto $C=\pi_i(\widetilde{C})$ for all $i>N$. Let $K_R=\sqcup_{i=1}^{N}\Gamma/\Gamma_i$, then for all $C\subset\Gamma\backslash K_R$ of $\diam(C)\leq R$, we can choose $\tilde C\subset\Gamma$ such that $\pi_i|_{\widetilde{C}}:\tilde C\to C$ is an isometric bijection. Define
		$$f_C=f\circ(\pi_i|_{\widetilde{C}})^{-1}:C\to H.$$
		As $\pi_i|_{\widetilde{C}}$ is an isometry and $f$ is a coarse map, then $f_C$ is also a coarse embedding with respect to $\rho_{\pm}$.
		
		(3)$\Rightarrow$(2). It is obvious thus omitted.
		
		(2)$\Rightarrow$(1). We will prove it using an equivalent characterization of coarse embeddability introduced in \cite[Proposition 3.2]{DGLY2002}. Recall that a locally finite metric space $X$ is said to admit a coarse embedding into Hilbert space if and only if there exist non-decreasing functions $\rho_+,\rho_-:\IR_+\to\IR_+$ satisfying $\lim\limits_{t\to\infty}\rho_{\pm}(t)=\infty$ such that, for every finite subspace $F\subset X$, there exists a map $f_F:F\to H$ satisfying
		$$\rho_-(d(x,y))\leq\|f_F(x)-f_F(y)\|\leq \rho_+(d(x,y))$$
		for all $x,y\in F$.
		
		Now, assume that $\Box_{\{\Gamma_i\}}(\Gamma)$ admits a coarse embedding at infinity into $ H$  for a certain filtration, it suffices to prove that any finite subsets of $\Gamma$ coarsely embed into $ H$ uniformly. Let $\rho_+,\rho_-$ be as in Definition \ref{CEinf}. For any bounded subset $F\subset\Gamma$, there exists $N_0>0$ such that the restriction of $\pi_i:\Gamma\to\Gamma/\Gamma_i$ to $F$ is an isometry for all $i>N_0$. Set $R=\diam(F)$. Since $\Box_{\{\Gamma_i\}}(\Gamma)$ admits a coarse embedding at infinity into $ H$ , there exists a bounded set $K_R\subset\Gamma$. Take a sufficiently large $N_1$ such that $K_R\cap \Gamma/\Gamma_i=\emptyset$ for all $i>N_1$. Let $N=\max\{N_0,N_1\}+1$. We have that $\pi_N(F)\cap K_R=\emptyset$ and $\diam(\pi_N(F))=\diam(F)=R$. As a result, there exists a coarse embedding $f_{\pi_N(F)}:\pi_N(F)\to H$ according to $\rho_{\pm}$. Define
		$$f_F=f_{\pi_N(F)}\circ\pi_N|_F:F\to H.$$
		It is obvious that $f_F$ is a coarse embedding with respect to the controlling functions $\rho_{\pm}$. Thus, $\Gamma$ admits a coarse embedding into $ H$ by the equivalent characterization.
	\end{proof}
	
	Following the argument above, we can generalize Theorem 2.3 to the following.
	\begin{Thm}\label{Galois cover}
		Let $X=\bigsqcup_{n\in\IN}X_n$ be the coarse disjoint union of a sequence of bounded metric spaces. If $(X_n)_{n\in \mathbb{N}}$ has a sequence of asymptotically faithful Galois coverings $(\widetilde{X}_n)_{n\in\IN}$ which admits a coarse embedding into Hilbert space, then $X$ admits a coarse embedding at infinity into Hilbert space.
	\end{Thm}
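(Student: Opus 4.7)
The plan is to adapt the implication (1)$\Rightarrow$(3) of Theorem \ref{CEiffCEinf}, replacing the single cover $\Gamma\to\Gamma/\Gamma_n$ by the given asymptotically faithful Galois covers $\pi_n:\widetilde{X}_n\to X_n$. The key observation that makes the same strategy go through is that, in a coarse disjoint union, any bounded subset of sufficiently small diameter located far out in $X$ must be confined to a single component $X_n$. Once this is established, asymptotic faithfulness lets us lift such a subset to $\widetilde{X}_n$, and the uniform coarse embedding of the family $(\widetilde{X}_n)_{n\in\IN}$ supplies the target map into $H$.

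In detail, let $\rho_{\pm}:\IR_+\to\IR_+$ and $\widetilde{f}_n:\widetilde{X}_n\to H$ witness the uniform coarse embeddability of $(\widetilde{X}_n)_{n\in\IN}$, so that
$$\rho_-(d(\widetilde{x},\widetilde{y}))\leq\|\widetilde{f}_n(\widetilde{x})-\widetilde{f}_n(\widetilde{y})\|\leq \rho_+(d(\widetilde{x},\widetilde{y}))$$
for every $n\in\IN$ and all $\widetilde{x},\widetilde{y}\in\widetilde{X}_n$. Given $R>0$, first choose $N_1$ such that $d(X_n,X_m)>R$ whenever $n\neq m$ and $\max\{n,m\}\geq N_1$ (possible because $d(X_n,X_m)\to\infty$ as $n+m\to\infty$); then choose $N_2$ so that $\pi_n$ is $(2R+1)$-isometric for all $n\geq N_2$ (possible by asymptotic faithfulness). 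Set $N=\max\{N_1,N_2\}$ and $K_R=\bigsqcup_{n<N}X_n$. Since each $X_n$ is bounded and $K_R$ is a finite union, $K_R$ is a bounded subset of $X$.

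Now let $C\subset X\setminus K_R$ be a bounded subset with $\diam(C)\leq R$. By the choices above, $C$ is contained in a single $X_n$ with $n\geq N$, for otherwise $C$ would meet two distinct components $X_n,X_m$ with $\max\{n,m\}\geq N_1$ and consequently $\diam(C)\geq d(X_n,X_m)>R$. Pick a basepoint $x_0\in C$ and any preimage $\widetilde{x}_0\in\pi_n^{-1}(x_0)$; because $\pi_n$ is a quotient by a free, proper, isometric group action, for each $x\in C$ we may choose $\widetilde{x}\in\pi_n^{-1}(x)$ realizing $d(\widetilde{x}_0,\widetilde{x})=d(x_0,x)\leq R$. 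Let $\widetilde{C}=\{\widetilde{x}:x\in C\}$. Then $\diam(\widetilde{C})\leq 2R<2R+1$, so by asymptotic faithfulness $\pi_n|_{\widetilde{C}}$ is an isometric bijection onto $C$. Define
$$f_C=\widetilde{f}_n\circ(\pi_n|_{\widetilde{C}})^{-1}:C\to H.$$
Since $(\pi_n|_{\widetilde{C}})^{-1}$ is an isometry and $\widetilde{f}_n$ satisfies the uniform bounds, $f_C$ satisfies the same bounds with the common controlling functions $\rho_{\pm}$, verifying Definition \ref{CEinf}.

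The only substantive technical point is the single-component reduction at the start of the third paragraph: one must translate the coarse-disjoint-union axiom $d(X_n,X_m)\to\infty$ into a concrete choice of $K_R$ as a finite union that absorbs all the pairs of components still closer than $R$. Apart from this, the proof is the covering-space manipulation already carried out in the $(1)\Rightarrow(3)$ direction of Theorem \ref{CEiffCEinf}, so no genuine new obstacle arises.
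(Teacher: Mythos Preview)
Your proposal is correct and follows exactly the approach the paper indicates: the paper does not give a separate proof of this theorem but simply says ``Following the argument above, we can generalize Theorem 2.3 to the following,'' and your write-up is precisely that adaptation of the $(1)\Rightarrow(3)$ argument. The only remark is that the clause ``we may choose $\widetilde{x}\in\pi_n^{-1}(x)$ realizing $d(\widetilde{x}_0,\widetilde{x})=d(x_0,x)$'' tacitly assumes the infimum defining the quotient metric is attained; if one wants to be fully safe, choose $\widetilde{x}$ with $d(\widetilde{x}_0,\widetilde{x})<R+\tfrac12$, note $\diam(\widetilde{C})<2R+1$, and then the $(2R+1)$-isometry property of $\pi_n$ forces the exact equality a posteriori.
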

	
	We remark here that the coarse embeddability at infinity into Hilbert space  for a group is equivalent to the coarse embeddability into Hilbert space (c.f. \cite{DGLY2002}). Assume that $G$ is a residually finite group which admits no coarse embedding into Hilbert space. By Theorem \ref{CEiffCEinf}, there exists a box space of $G$ which admits no coarse embedding at infinity into Hilbert space. Thus there are metric spaces which are not coarsely embeddable at infinity into Hilbert space. It is worth noting that D. Osajda \cite{Osajda18} constructed a finitely
	generated, residually finite non-exact groups, whose box spaces admit a coarsely embedding at
	infinity into Hilbert space.
	
	Now, we will list some examples of metric spaces which admit  a coarse embedding at infinity into Hilbert space.
	\begin{Exa}[Fibred coarse embedding into Hilbert space \cite{CWY2013}]
		 A metric space $(X,d)$ is said to admit a fibred coarse embedding into Hilbert space if there exists
		\begin{enumerate}[(1)]
			\item a field of Hilbert spaces $( H_x)_{x\in X}$ over $X$;
			\item a section $s:X\to\sqcup_{x\in X_n} H_x$;
			\item two non-decreasing functions $\rho_{\pm}:\mathbb{R}_+\to \mathbb{R}_+$ with $\displaystyle\lim_{r\to\infty}\rho_{\pm}(r)=\infty$
		\end{enumerate}
		such that for any $r>0$, there exists a bounded subset $K\subset X$ for which there exists a "trivialization"
		$$t_C:( H_x)_{x\in C}\to C\times  H$$
		for each subset $C\subset X\backslash K$ of diameter less than $r$, such that the restriction of $t_C$ to the fiber $ H_x$ $(x\in C)$ is an isometry $t_x(z): H_z\to  H$, satisfying
		\begin{enumerate}[(1)]
			\item for any $z,z'\in C$,
			$$\rho_-(d(z,z'))\leq \|t_C(z)(s(z))-t_C(z')(s(z'))\|\leq \rho_+(d(z,z'));$$
			\item for any two subsets $C_1, C_2\subset X\backslash K$ of diameter less than $r$ with $C_1\cap C_2\ne\emptyset$, there exists an isometry $t_{C_1C_2}: H\to  H$ such that $t_{C_1}(z)\circ t^{-1}_{C_2}(z)= t_{C_1C_2}$ for all $z\in C_1\cap C_2$.
		\end{enumerate}
	\end{Exa}
	It is obvious that if a metric space $X$ admits a fibred coarse embedding into Hilbert space, then it admits a coarse embedding at infinity into Hilbert space.
	
	\begin{Exa}[Pillon's Property A at infinity \cite{T2018}]\label{Property A at infinity}
		The notion of Property A was introduced by Yu \cite{Yu2000} as a coarse analogue of amenability. We will work with the following characterization of Property A introduced in \cite{Tu2001}. A discrete metric space $X$ with bounded geometry has Property A if and only if for every $R>0$ and $\varepsilon>0$ there exists a map $\xi:X\to\ell^2(X),x\mapsto\xi_x$ and $S>0$ such that
		\begin{enumerate}[(1)]
			\item $\|\xi_x\|=1$ for all $x\in X$;
			\item $|1-\langle\xi_x-\xi_y\rangle|\leq\varepsilon$ for all $d(x,y)\leq R$;
			\item ${\rm supp}(\xi_x)\subset B_X(x,S)$ for all $x\in X$.
		\end{enumerate}
		Let $\{X_i\}_{i\in I}$ be a sequence of metric spaces. Then $\{X_i\}_{i\in I}$ is said to have Property A if $X_i$ has Property  A with uniform parameters $R,\varepsilon, S$ above.
		
		In \cite{T2018}, T. Pillon introduced a notion of coarse amenability at infinity. We will call it Property A at infinity in this paper. Let $X$ be a uniformly discrete metric space with bounded geometry. The space $X$ is said to have {\em Property A at infinity} if for all $R,\varepsilon> 0$, there exists $S\geq 0$ such that for any $L\geq 0$, there exist a finite subset $K_L\subset X$ with the property that all finite subsets $C\subset X\backslash K_L$ of diameter at most $L$ has Property A, i.e. the parameter $S$ does not depend on $L$.
		
		It was proved in \cite{NY2012} that a metric space $X$ with Property A is coarsely embeddable and the controlling functions $\rho_{\pm}$ are only determined by three parameters $R,\varepsilon$ and $S$ above. Thus it is clear that for a metric space with bounded geometry, if it has Property A at infinity, then is coarsely embeddable at infinity into Hilbert space.
	\end{Exa}
	
	
	We say that  a sequence of groups $(G_n)_{n \in \mathbb{N}}$ has uniformly finite generating sets if there exists a positive integer $N$ such that each group $G_n$ has a generating subset $S_n$ with $|S_n|\leq N$. Note that if the sequence $(G_n)_{n \in \mathbb{N}}$ has uniformly finite generating sets, then the coarse disjoint union $\bigsqcup G_n$ has bounded geometry. Let us now consider the coarse embeddings at infinity into Hilbert space  under extensions of groups.
	\begin{Pro}\label{extension}
		Let $(1\to N_n\to G_n\to Q_n\to 1)_{n\in\IN}$ be a sequence of extensions of finite groups with uniformly finite generating sets. Suppose that
		\begin{enumerate}[(1)]
			\item the coarse disjoint union $N=\bigsqcup_{n\in\IN}N_n$ with the induced metric from the word metrics of $(G_n)_{n\in\IN}$  admits a coarse embedding at infinity into Hilbert space;
			\item the coarse disjoint union $Q=\bigsqcup_{n\in\IN}Q_n$ with the quotient metrics has Property A at infinity,
		\end{enumerate}
		Then the coarse disjoint union $G=\bigsqcup_{n\in\IN}G_n$ admits a coarse embedding at infinity into Hilbert space.
	\end{Pro}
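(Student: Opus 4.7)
My plan is to mimic the classical Dadarlat--Guentner argument (an extension of a coarsely embeddable group by a group with Property A is coarsely embeddable) and propagate it through the ``at infinity'' framework. Concretely, the Property A function of $Q$ and the coarse embedding of (a bounded piece of) $N$ will be assembled into a single map on $G$, using Property A to control the $Q$-direction and coarse embeddability of $N$ to control the fibre direction. Throughout, the ``at infinity'' aspects will be handled uniformly by arranging that all of the bounded subsets of $G$, $Q$, $N$ that appear live in $G_n$, $Q_n$, $N_n$ for $n$ large enough.

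First I would fix $R>0$ and choose a small $\varepsilon>0$ (to be pinned down at the end). Applying Property A at infinity of $Q$ with parameters $R,\varepsilon$ supplies $S>0$ and, for every $L>0$, a bounded $K^Q_L\subset Q$ such that every finite subset of $Q\setminus K^Q_L$ of diameter at most $L$ carries maps $q\mapsto\xi_q\in\ell^2(Q)$ with $\|\xi_q\|=1$, $\supp(\xi_q)\subset B(q,S)$, and $|1-\langle\xi_q,\xi_{q'}\rangle|\leq\varepsilon$ for $d(q,q')\leq R$. Next, setting $R'$ on the order of $R+2S$, I apply coarse embeddability at infinity of $N$ with $R'$ to obtain controlling functions $\rho^N_\pm$ and a bounded $K^N_{R'}\subset N$. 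Since $K^Q_R$ and $K^N_{R'}$ are bounded, I may pick $N_0$ so large that $Q_n\cap K^Q_R=\emptyset$ and $N_n\cap K^N_{R'}=\emptyset$ for every $n>N_0$, and set $K_R:=\bigsqcup_{n\leq N_0}G_n$. Then any $C\subset G\setminus K_R$ of diameter $\leq R$ sits in a single $G_n$ with $n>N_0$, and both the Property A data on $\pi_n(C)$ and a coarse embedding of an appropriate bounded subset of $N_n$ are available uniformly in $n$.

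With this setup in place, I would fix a length-minimizing set-theoretic section $\sigma_n:Q_n\to G_n$, and for each $g\in C$ and $q\in\supp(\xi_{\pi_n(g)})$ define a fibre element $\tilde n_q(g)\in N_n$ built from $g$ using only the \emph{short} section elements $\sigma_n(q^{-1}\pi_n(g))$ (of $G$-length at most $S$). The candidate map is
\[
f_C(g):=\xi_{\pi_n(g)}\oplus\sum_{q\in\supp(\xi_{\pi_n(g)})}\xi_{\pi_n(g)}(q)\,\delta_q\otimes h(\tilde n_q(g))\in\ell^2(Q_n)\oplus\bigl(\ell^2(Q_n)\otimes H_N\bigr),
\]
where $h$ is the coarse embedding of the bounded set $\{\tilde n_q(g):g\in C,\,q\in\supp(\xi_{\pi_n(g)})\}\subset N_n\setminus K^N_{R'}$. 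The upper bound is a routine estimate. The lower bound splits on whether $d_{Q_n}(\pi_n(g_1),\pi_n(g_2))>R$: the ``far'' regime is handled by the disjointness of the $\xi$-supports, forcing $\|\xi_{\pi_n(g_1)}-\xi_{\pi_n(g_2)}\|\geq\sqrt{2}$ via the first summand; the ``near'' regime uses $\langle\xi_{\pi_n(g_1)},\xi_{\pi_n(g_2)}\rangle\geq 1-\varepsilon$ together with the lower bound $\rho^N_-$ applied to the fibre distance in the second summand.

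The hardest step, I expect, is choosing the fibre element $\tilde n_q(g)$ so that (a) the entire collection $\{\tilde n_q(g)\}$ lies in a single bounded subset of $N_n$ of diameter at most $R'$, and (b) for fixed $q$ in the common support, $d_{N_n}(\tilde n_q(g_1),\tilde n_q(g_2))$ is controlled by a fixed function of $d_G(g_1,g_2)$ plus a constant depending only on $S$. Since a set-theoretic section $\sigma_n$ is typically not Lipschitz, the naive choice $\tilde n_q(g)=\sigma_n(q)^{-1}g$ fails outright, and the correct definition must use only the \emph{short} section difference $\sigma_n(q^{-1}\pi_n(g))$---this is exactly where the condition $\supp(\xi_q)\subset B(q,S)$ enters crucially, by forcing $q^{-1}\pi_n(g)$ to have $Q$-length at most $S$. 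Once this cocycle-type distance estimate is established, $\varepsilon$ is tuned small enough that the ``near''-regime lower bound dominates the Property A defect, and the two regimes patch together into a single non-decreasing $\rho^G_-$ blowing up at infinity.
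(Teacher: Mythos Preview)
Your plan to invoke the Dadarlat--Guentner cocycle and to combine Property~A data on $Q$ with a fibre embedding of $N$ is the right skeleton, and it is the one the paper follows. The gap is in how you package the output. By feeding the diameter bound $R$ from Definition~\ref{CEinf} directly into Property~A as the ``near radius'', you force the support radius $S$---and hence everything built from it---to depend on $R$. The cocycle estimate reads $\bigl|d_N(\tilde n_q(g_1),\tilde n_q(g_2))-d_G(g_1,g_2)\bigr|\le 2S$, so both your upper and lower bounds carry an uncontrolled $S=S(R)$. Moreover, since $d_Q\le d_G\le\mathrm{diam}(C)\le R$, the ``far regime'' $d_Q>R$ is vacuous, and your lower bound rests entirely on the near regime, producing at best $\rho^N_-(t-2S_R)$; for fixed $t$ this collapses as $R\to\infty$, so no $R$-independent $\rho^G_-$ can emerge. (The claim that $d_Q>R$ forces disjoint $\xi$-supports is separately wrong: disjointness needs $d_Q>2S$, and typically $S\gg R$.)

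The paper avoids this by \emph{not} building a coarse embedding directly. It invokes the unit-vector criterion of Lemma~\ref{coarse embeddable}: a family is coarsely embeddable iff for every $R_0,\varepsilon$ one has unit vectors with $|1-\langle\xi_g,\xi_h\rangle|\le\varepsilon$ when $d\le R_0$ and $|\langle\xi_g,\xi_h\rangle|\to 0$ uniformly at large scale. The crucial point is that $R_0$ is \emph{decoupled} from the diameter bound $R$: the Property~A radius $S_Q$ then depends only on $(R_0,\varepsilon)$, and the coarse-embedding parameters for $N$ only on $2S_Q+R_0$---all independent of $R$. With $\zeta^n$ the unit vectors for $N$ and $\lambda^n$ the Property~A function for $Q$, the paper takes $\xi^n_g(p)=\lambda^n_{\pi_n(g)}(p)\,\zeta^n_{\eta_n(g,p)}$ where $\eta_n(g,p)=\sigma_n(p)^{-1}g\,\sigma_n(\pi_n(g)^{-1}p)$, and verifies the two conditions of Lemma~\ref{coarse embeddable}. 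That lemma then manufactures $\rho_\pm$ from the $(R_0,\varepsilon)$-to-parameter dependence alone, hence automatically $R$-independent. Your direct-sum map can be salvaged only by routing through this criterion: replacing the raw embedding $h$ by unit vectors $\zeta$ is exactly what kills the unbounded $\|h\|$ terms that were driving your $\varepsilon$ and $S$ into a circular dependence.
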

	
	To prove Proposition \ref{extension}, we need the following equivalent characterization of coarse embeddability \cite[Proposition 2.1]{DG2003}.
	
	\begin{Lem}\cite{DG2003}\label{coarse embeddable}
		A sequence of metric spaces $\{X_i\}_{i\in I}$ is coarsely embeddable into Hilbert space if and only if for all $R>0$ and $\varepsilon>0$ there exists a Hilbert space $\mathcal{H}$ and maps $\xi^i:X_i\to\mathcal{H}$, $x\mapsto \xi^i_x$, such that
		\begin{enumerate}[(1)]
			\item $\|\xi^i_x\| = 1$ for all $x\in X_i$ and $i\in I$;
			\item $\sup\left\{|1-\langle \xi^i_x,\xi^i_y\rangle|:d(x,y)\leq R,x,y\in X\right\}\leq\varepsilon$;
			\item for any $\hat\varepsilon>0$, there exists $S>0$, such that
			$$\sup\left\{|\langle \xi^i_x,\xi^i_y\rangle|:d(x,y)\geq S,x,y\in X_i\right\}<\hat\varepsilon$$
			for all $i\in I$.\hfill$\square$
		\end{enumerate}
	\end{Lem}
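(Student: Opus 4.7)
The plan rests on the classical correspondence, via Schoenberg's theorem, between coarse embeddings into Hilbert space and positive definite Gaussian kernels: for any $t>0$ and any map $f\colon X\to H$, the kernel $(x,y)\mapsto e^{-t\|f(x)-f(y)\|^2}$ is positive definite and therefore realised as $\langle\xi_x,\xi_y\rangle$ for a family of unit vectors $\xi_x$ in some auxiliary Hilbert space $\mathcal H$. I would use this in the forward direction and a direct sum construction in the converse.

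For $(\Rightarrow)$, I start from uniform coarse embeddings $f_i\colon X_i\to H$ with common controlling functions $\rho_\pm$. Given $(R,\varepsilon)$, choose $t>0$ small enough that $1-e^{-t\rho_+(R)^2}\leq\varepsilon$, and apply Schoenberg to produce unit vectors $\xi^i_x\in\mathcal H$ with $\langle\xi^i_x,\xi^i_y\rangle=e^{-t\|f_i(x)-f_i(y)\|^2}$. Condition (1) is automatic. For (2), if $d(x,y)\leq R$ then
\[
1-\langle\xi^i_x,\xi^i_y\rangle=1-e^{-t\|f_i(x)-f_i(y)\|^2}\leq 1-e^{-t\rho_+(R)^2}\leq\varepsilon.
\]
For (3), given $\hat\varepsilon>0$ pick $S$ with $e^{-t\rho_-(S)^2}<\hat\varepsilon$; then $d(x,y)\geq S$ forces $|\langle\xi^i_x,\xi^i_y\rangle|<\hat\varepsilon$ uniformly in $i$, since $\rho_-$ does not depend on $i$.

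For $(\Leftarrow)$, take $R_n=n$ and $\varepsilon_n=2^{-n}$ and invoke the hypothesis to obtain, for each $n$, a Hilbert space $\mathcal H_n$ and uniform maps $\xi^{i,n}\colon X_i\to\mathcal H_n$ satisfying (1)--(3). Apply (3) with $\hat\varepsilon=1/4$ to extract finite constants $S_n>0$ (depending on $n$ but not on $i$) such that $\|\xi^{i,n}_x-\xi^{i,n}_y\|^2\geq 3/2$ whenever $d(x,y)\geq S_n$. Fix basepoints $x_{0,i}\in X_i$ and set
\[
f_i(x)=\bigoplus_{n\in\IN}\bigl(\xi^{i,n}_x-\xi^{i,n}_{x_{0,i}}\bigr)\in\bigoplus_{n\in\IN}\mathcal H_n.
\]
Convergence and the upper bound follow by splitting at $n=\lceil d(x,y)\rceil$: the tail $n\geq d(x,y)$ is controlled by (2) via $\|\xi^{i,n}_x-\xi^{i,n}_y\|^2\leq 2\varepsilon_n=2^{1-n}$, which is summable, and the remaining finitely many terms contribute at most $4\,d(x,y)$, giving $\|f_i(x)-f_i(y)\|^2\leq 4\,d(x,y)+C$ uniformly in $i$. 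The lower bound reads
\[
\|f_i(x)-f_i(y)\|^2\geq\tfrac{3}{2}\,N(d(x,y)),\qquad N(r)=\#\{n:S_n\leq r\}.
\]

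The delicate point I expect to be the principal obstacle is verifying $N(r)\to\infty$ as $r\to\infty$, since property (3) provides no a priori growth rate on the constants $S_n$. Nonetheless, each individual $S_n$ is finite, so for every fixed $n_0$ one has $N(r)\geq n_0$ whenever $r\geq\max_{n\leq n_0}S_n$, which suffices to conclude that $\rho_-(r)=\sqrt{\tfrac{3}{2}\,N(r)}$ is a valid non-decreasing controlling function tending to infinity. Together with the upper bound this exhibits $\{f_i\}$ as a uniform coarse embedding of the family $\{X_i\}_{i\in I}$ into the direct sum Hilbert space.
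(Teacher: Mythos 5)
The paper does not actually prove this lemma --- it is quoted from \cite{DG2003} with the proof omitted --- and your argument is precisely the standard one from that reference (Schoenberg/Gaussian kernels $\langle\xi^i_x,\xi^i_y\rangle=e^{-t\|f_i(x)-f_i(y)\|^2}$ for the forward direction, a telescoping direct sum $\bigoplus_n(\xi^{i,n}_x-\xi^{i,n}_{x_{0,i}})$ for the converse), and it is correct. The only cosmetic points are that in the forward direction one should place all the $\xi^i$ into a single common Hilbert space (e.g.\ the direct sum of the GNS spaces), and that one may replace $S_n$ by $\max\{S_1,\dots,S_n,n\}$ to guarantee $N(r)<\infty$ for every $r$; neither affects the argument, since well-definedness of $f_i$ already follows from condition (2) alone.
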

	
	\begin{proof}[Proof of Proposition \ref{extension}]
		
		Here we just sketch a modified proof based on  \cite[Theorem 4.1]{DG2003}.
		Given $R>0$, choose a sufficiently large $S_0$ such that $d(N_i,N_j)>8R$ for all $i,j>S_0$. As $N=\bigsqcup_{n\in\IN}N_n$ is coarsely embeddable at infinity and $Q=\bigsqcup_{n\in\IN}N_n$ has Property A at infinity, there exists $N_R>S_0$ such that the sequence of all bounded subsets of $\bigsqcup_{n\geq N_R}N_n$ with diameter no more than $8R$ is coarsely embeddable and the sequence of all bounded subsets of $\bigsqcup_{n\geq N_R}Q_n$ with diameter no more than $2R$ has Property A.
		
		For $n\geq N_R$, any bounded subset of $G_n$ with diameter no more than $R$ is isometric to a subset of $B_{G_n}(e,R)$, where $e$ is the unity of the group $G_n$. We claim that the sequence $\{B_{G_n}(e,R)\}_{n\geq N_R}$ is coarsely embeddable by using Lemma \ref{coarse embeddable}. Namely, for any $\varepsilon>0$ and $R_0>0$, we just need to show that there exist a sequence of $\xi^n:B_{G_n}(e,R)\to\ell^2(B_{Q_n}(e,R),\mathcal{H}^n)$ such that $\|\xi^n_g\|=1$ for all $g\in G_n$, and $\xi^n$ satisfying the following conditions
		\begin{itemize}
			\item[I.]$|1-\langle \xi^n_g,\xi^n_h\rangle|\leq\varepsilon$ for all $d_{G_n}(g,h)\leq R_0$;
			\item[II.]for any $\hat\varepsilon>0$, there exists $S>0$ such that $\sup\{|\langle \xi^n_g,\xi^n_h\rangle|\,|\,d_{G_n}(g,h)\geq S,g,h\in B_{G_n}(e,R)\}<\hat\varepsilon$ for all $n\geq N_R$.
		\end{itemize}
		
		Since the sequence $\{B_{Q_n}(e,R)\}_{n\geq N_R}$ has property A, according to Example \ref{Property A at infinity}, for given $\varepsilon$ and $R_0$, there exists a sequence of maps $\lambda^n:B_{Q_n}(e,R)\to\ell^2(B_{Q_n}(e,R))$ and $S_Q>0$ such that $\|\lambda^n_p\|=1$ for all $p\in B_{Q_n}(e,R)$ and $n\geq N_R$, and satisfying the following conditions
		\begin{itemize}
			\item[$1_Q$.]$|1-\langle \lambda^n_p,\lambda^n_q\rangle|\leq\frac{\varepsilon}2$ for all $d_{Q_n}(p,q)\leq R_0$ and $n\geq N_R$;
			\item[$2_Q$.]$\supp(\lambda^n_p)\subseteq B_{Q_n}(p,S_Q)\cap B_{Q_n}(e,R)$ for all $n\geq N_R$.
		\end{itemize}
		
		Since the sequence $\{B_{N_n}(e,4R)\}_{n\geq N_R}$ is coarsely embeddable, according to Lemma \ref{coarse embeddable}, for given $\varepsilon$ and $R_0$, there exists a sequence of Hilbert space valued functions $\zeta^n:B_{N_n}(e,4R)\to  H^n$ satisfying that $\|\zeta^n_a\|=1$ for all $a\in N_n$ and $n\geq N_R$, and such that
		\begin{itemize}
			\item[$1_N$.]$|1-\langle \zeta^n_a,\zeta^n_b\rangle|\leq\frac{\varepsilon}2$ for all $d_{N_n}(a,b)\leq 2S_Q+R_0$;
			\item[$2_N$.]for any $\hat\varepsilon>0$, there exists $S_N>0$, such that $\sup\{|\langle \zeta^n_a,\zeta^n_b\rangle|\,|\,d_{N_n}(a,b)\geq S_G,a,b\in B_{N_n}(e,4R)\}<\hat\varepsilon$ for all $n\geq N_R$.
			$\sup\{|\langle \zeta^n_a,\zeta^n_b\rangle|\,|\,d_{N_n}(a,b)\geq S_G,a,b\in B_{N_n}(e,4R)\}$ goes to $0$ uniformly by $n$ as $S_N$ goes to $\infty$.
		\end{itemize}
		
		Choose a set-theoretic section $\sigma_n:Q_n\to G_n$ of the quotient map $\pi_n:G_n\to Q_n$ with the property that
		$$d_{Q_n}(p, e) = d_{G_n}(\sigma_n(p), e) \mbox{ for all }p\in Q_n.$$
		For any $g\in G_n$, $p\in Q_n$ and $n\in\IN$, as in \cite[Theorem 4.1]{DG2003}, define $\eta_n:G_n\times Q_n\to N_n$ by
		$$\eta_n(g,p)=\sigma_n(p)^{-1}g\sigma_n(\pi_n(g)^{-1}p).$$
		
		Then, define $\xi^n:B_{G_n}(e,R)\to\ell^2(B_{Q_n}(e,R),\mathcal{H}^n)$ by
		$$\xi^n_g(p)=\lambda^n_{\pi_n(g)}(p)\zeta^n_{\eta_n(g,p)}.$$
		
		One can verify  that $\xi^n$ satisfying the required properties I and II. As all parameters in condition ($1_N$), ($2_N$), ($1_Q$) and ($2_Q$) does not depend on $R$. We complete the proof.

	\end{proof}
	
	In the rest of this section, we will recall the notions of warped metric and warped cones introduced by J. Roe \cite{R2005}. A warped cone is constructed from the action of a finitely generated group $\Gamma$ on a compact metric space.  It establishes the bridge linking dynamic systems and coarse geometry. The warped cones have been widely used to construct examples of metric spaces which do not admit a coarse embedding into Hilbert space.  We will use warped cones to provide more examples.
	
	\begin{Def}[\cite{R2005,S2018}]
		Let $(Y,d_Y)$ be a compact metric space and $(\Gamma,S)$ be a group with a finite generating set $S$ which acts by homeomorphisms on $Y$. The intrinsic open cone over $Y$ is $\O Y=Y\times\IR_+$ equipped with the metric defined by
		$$d_{\O Y}((y_1,t_1)(y_2,t_2))=|t_1-t_2|+\frac{\min(t_1,t_2)d_Y(y_1,y_2)}{\diam(Y)},$$
		and the obvious extension of the $\Gamma$-action on $Y$.
		
		The warped cone of $Y$, denoted by $\O_{\Gamma}Y$, is the intrinsic open cone $\O Y=Y$ equipped with the warped metric $d_{\Gamma}$, which is the greatest metric that satisfies the inequalities
		$$d_{\Gamma}(x,y)\leq d(x,y),\qquad d_{\Gamma}(x,sx)\leq 1$$
		for any $x\in \O Y$ and $s\in S$.
	\end{Def}
	
	\begin{Def}
		An action $\Gamma\curvearrowright Y$ is linearisable in a Banach space $E$ if there exists an isometric representation of $\Gamma$ on $E$ and a bi-Lipschitz equivariant embedding $Y\to E$.
	\end{Def}
	
	Let $q:X\to Y$ be a surjective map between two metric spaces. $q$ is said to be \emph{asymptotically faithful} if for every $R>0$, there exists $K\subseteq Y$ such that for any $y\in Y\backslash K$ and $x\in q^{-1}(y)$, the map $q$ restricts to an isometry from $B(x,R)$ to $B(y,R)$.  For any $1\leq p<\infty$ and Banach spaces $E_1$, $E_2$, denote by $E_1\oplus_pE_2$ the $\ell^p$-direct sum:
	$$E_1\oplus_pE_2=\{(x,y)\in E_1\times E_2\mid x\in E_1,y\in E_2\}$$
	with the norm
	$$\|(x,y)\|=\left(\|x\|^p+\|y\|^p\right)^{1/p}.$$
	
	We conclude this section by the following theorem.
	
	\begin{Pro}\label{warped cone}
		Let $1\leq p<\infty$. Assume that the action $\Gamma\acton Y$ is free and linearizable in $\ell^p$ and $\Gamma$ admits a coarse embedding into $\ell^p$, then the warped cone $\O_{\Gamma}Y$ admits a coarse embedding at infinity  into $\ell^p$.
	\end{Pro}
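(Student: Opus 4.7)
The plan is to exploit the well-known local product structure of warped cones far from the apex $Y \times \{0\}$: at heights $t$ much larger than the scale $R$, a ball of radius $R$ in the warped metric $d_\Gamma$ decomposes canonically as a product of a word-metric ball in $\Gamma$ with a Euclidean ball in $\O Y$, with $d_\Gamma$ being bi-Lipschitz equivalent to the $\ell^p$-sum of these two metrics on such a ball. Combining the coarse embedding $f : \Gamma \to \ell^p$ with the bi-Lipschitz equivariant embedding $\iota : Y \hookrightarrow \ell^p$ given by linearizability should then yield a coarse embedding at infinity into $\ell^p$.

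Fix $R > 0$. Let $\pi : \Gamma \to \mathrm{Isom}(\ell^p)$ be the isometric representation for which $\iota$ is equivariant, and let $L_-, L_+$ be the bi-Lipschitz constants of $\iota$. Since $Y$ is compact and the action is free, for every $s \in B_\Gamma(e, 2R) \setminus \{e\}$ the map $y \mapsto \|\iota(y) - \pi(s)\iota(y)\|_{\ell^p}$ is continuous and strictly positive, so a single $\delta_R > 0$ satisfies $d_Y(y, sy) \geq \delta_R$ uniformly in $y \in Y$ and in such $s$. From the cone metric formula one can choose $T_R > 0$ large enough so that $t \geq T_R$ forces $d_{\O Y}((y,t),(sy,t)) > 2R$ for every such $s$. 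Set $K_R := Y \times [0, T_R]$, a bounded subset of $\O_\Gamma Y$.

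Let $C \subseteq \O_\Gamma Y \setminus K_R$ have diameter $\leq R$ in $d_\Gamma$, and fix a basepoint $c_0 = (y_0, t_0) \in C$. Using the path description of $d_\Gamma$ together with the separation above, each $c = (y, t) \in C$ admits a unique decomposition $c = s_c \cdot (y'_c, t)$ with $s_c \in B_\Gamma(e, R)$ and $(y'_c, t) \in B_{d_{\O Y}}(c_0, R)$: existence follows from any near-geodesic path from $c_0$ to $c$, and uniqueness is enforced by the threshold $T_R$. Define
\[
f_C : C \to \ell^p \oplus_p \ell^p \oplus_p \mathbb{R}, \qquad f_C(c) = \bigl( f(s_c),\, \iota(y'_c),\, t \bigr).
\]

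The main obstacle is verifying that $f_C$ is a coarse embedding with controlling functions $\rho_\pm$ depending only on the fixed data $f$ and $\iota$, hence independent of $R$. The upper bound is routine: concatenating paths yields $d_\Gamma(c,c') \leq \|s_c^{-1} s_{c'}\|_S + d_{\O Y}((y'_c, t),(y'_{c'}, t'))$, and applying the upper controlling function of $f$ and the Lipschitz constant $L_+$ componentwise gives a uniform $\rho_+$. The delicate part is the lower bound: one must argue that a $d_\Gamma$-geodesic from $c$ to $c'$ cannot trade generator jumps for short Euclidean moves, so it records at least $\|s_c^{-1}s_{c'}\|_S$ generator applications and Euclidean length at least a uniform constant times $d_{\O Y}((y'_c, t),(y'_{c'}, t'))$; the threshold $T_R$ is precisely what forbids such trades by separating the distinct orbit points that appear in $C$. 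Combining the two componentwise lower bounds through the $\ell^p$-sum then yields a $\rho_-$ with $\rho_-(r) \to \infty$, which is independent of $R$, completing the verification.
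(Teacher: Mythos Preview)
Your overall strategy coincides with the paper's: both exploit the local product structure of $\O_\Gamma Y$ far from the apex. The paper packages this by passing to the global ``covering space'' $(\Gamma\times\O Y,d^1)$ of Sawicki--Wu, citing their result that the quotient map $q:(\Gamma\times\O Y,d^1)\to\O_\Gamma Y$, $(\gamma,x)\mapsto\gamma x$, is asymptotically faithful when the action is free, and then invoking (the analogue of) Theorem~\ref{Galois cover}. Your decomposition $c\mapsto(s_c,y'_c,t)$ is precisely a local section of $q$, so conceptually the two arguments are the same; you just re-derive asymptotic faithfulness by hand rather than citing it.

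There is, however, a genuine gap in your explicit map $f_C(c)=\bigl(f(s_c),\iota(y'_c),t\bigr)$: the component $\iota(y'_c)$ ignores the cone scaling, so no $R$-independent lower control $\rho_-$ exists. Concretely, take $c=(y,t)$ and $c'=(y',t)$ in $C$ with $s_c=s_{c'}=e$ and $d_Y(y,y')=\epsilon$. Then (via the local isometry with $d^1$, and using that linearizability forces every $\gamma\in\Gamma$ to act on $Y$ by a uniformly bi-Lipschitz map) one has $d_\Gamma(c,c')\asymp d_{\O Y}(c,c')=t\epsilon/\diam(Y)$, whereas $\|f_C(c)-f_C(c')\|=\|\iota(y)-\iota(y')\|\asymp\epsilon$. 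Fixing $r=t\epsilon/\diam(Y)$ and sending $t\to\infty$ drives $\|f_C(c)-f_C(c')\|\to 0$ while $d_\Gamma(c,c')=r$ stays fixed, so your claimed lower bound fails. The remedy is exactly what the paper does: replace $\iota(y'_c)$ by the \emph{cone extension} $f_{\O Y}(y'_c,t)$ of $\iota$ (morally $(t,\,t\,\iota(y'_c)/\diam(Y))\in\IR\oplus_p\ell^p$), which is an equivariant bi-Lipschitz embedding of $(\O Y,d_{\O Y})$ and hence records the factor of $t$. With that correction your argument goes through and becomes equivalent to the paper's.
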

	
	We remark that it has been proved independently in \cite{SW2020} and \cite{WW2017} that if the action $\Gamma\acton Y$ is free and linearizable and $\Gamma$ is a-T-menable, then $\O_{\Gamma}Y$ admits a fibred coarse embedding into Hilbert space.
	
	\begin{proof}[Proof of Theorem \ref{warped cone}]
		We define the twisted metric $d^1$ on $\Gamma\times\O Y$ to be the largest metric such that
		\begin{equation*}\label{eq1}d^1((\gamma,x),(\gamma,y))\leq d_{\O Y}(\gamma x,\gamma y)\quad\mbox{and}\quad d^1((\gamma,x),(s\gamma,x))=1\end{equation*}
		for all $s\in S\backslash\{e\}$, $\gamma\in\Gamma$ and $x,y\in\O Y$. Define the action $\Gamma\acton\Gamma\times\O Y$ by the formula
		$$\gamma(\eta,x)=(\eta\gamma^{-1},\gamma x).$$
		Note that the action on $\Gamma\times\O Y$ is isometric. Consider the quotient map $q:\Gamma\times\O Y\to\O Y$ defined by $(\gamma,x)\mapsto \gamma x$. By \cite[Proposition 3.7]{SW2020}, we have that the warped metric $d_{\Gamma}$ is equal to the quotient metric of $d^1$ under the quotient map $q:\Gamma\times\O Y\to\O Y$. Moreover, as the group action $\Gamma\acton\O Y$ is free, we have $q:\Gamma\times\O Y\to\O_{\Gamma}Y$ is asymptotically faithful by \cite[Proposition 3.10]{SW2020}.
		
		By a similar argument in Corollary \ref{Galois cover}, it suffices to prove that $(\Gamma\times\O Y,d^1)$ is coarsely embeddable. Note that the equivariant bi-Lipschitz embedding $Y\to \ell^p$ can be extended to an equivariant bi-Lipschitz embedding $f_{\O Y}:\O Y\to \IR\oplus_p \ell^p$. Similarly, $\Gamma$ also admits a coarse embedding $f_{\Gamma}:\Gamma\to \ell^p$. Hence, it is easy to check that the map
		$$(\Gamma\times\O Y,d^1)\to \ell^p\oplus_p\IR\oplus_p \ell^p$$
		defined by
		$$(\gamma,r,y)\mapsto(f_{\Gamma}(\gamma),f_{\O Y}(r,y))$$
		for all $\gamma\in\Gamma$, $(r,y)\in\O Y$, is a coarse embedding. Note that $\ell^p\oplus_p\IR\oplus\ell^p$ is isometric to $\ell^p$. Thus we complete the proof.
	\end{proof}
	
	\section{Generalized expanders at infinity }
	
    In this section, we will first introduce a notion of generalized expanders at infinity, which is the obstruction of coarse embedding at infinity into Hilbert space. Then, we introduced a notion of containment of an expander at infinity regarded as a special case of generalized expanders at infinity.
	
	In \cite{Tessera}, R. Tessera introduced a notion of generalized expanders which obstructs to coarse embedding into Hilbert space. Here we introduce the following notion of generalized expanders at infinity.
		\begin{Def}
		A metric space $(X,d)$ is called a {\em generalized expander at infinity} if there exists a constant $c>0$,  and two sequences of positive numbers $\{r_m\}_{m \in \mathbb{N}}$ and $\{R_m\}_{m \in \mathbb{N}}$ increasing to infinity, such that for every bounded subset $K\subset X$ and every $m \in \mathbb{N}$, there exists a subset $C_{K,m}\subset X/K$ with $\diam (C_{K,m})\leq r_m$ and a probability measure $\mu$ on $C_{K,m}\times C_{K,m}$ such that
		\begin{enumerate}[(1)]
			\item $\mu(x,y)=\mu(y,x);$
			\item $\mu(x,y)=0$ if $d(x,y)\leq R_m;$
			\item for every map $f$ from $C_{K,m}$ to a Hilbert space $H$ satisfying $$\|f(x)-f(y)\|\leq d(x,y)$$ for all $x, y \in C_{K,m}$, the inequality
			$$\sum_{x,y \in C_{K,m}}\|f(x)-f(y)\|^2\mu(x,y)\leq c$$
			holds.
		\end{enumerate}
	\end{Def}

	Now, let us show that the generalized expanders at infinity is an obstruction to the coarse embeddability at infinity. We have the following result.
	\begin{Thm}\label{main result}
		A  metric space $X$ does not coarsely embed at infinity into Hilbert space if and only if $X$ coarsely contains a generalized expander at infinity.
	\end{Thm}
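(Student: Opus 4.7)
The plan is to prove both implications, adapting Tessera's duality characterization of generalized expanders to the ``at infinity'' setting.

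For the easy direction ($\Leftarrow$), I would argue by contradiction. Assume $X$ coarsely contains a generalized expander at infinity with constants $c$, $\{r_m\}$, $\{R_m\}$, and simultaneously that $X$ admits a coarse embedding at infinity into Hilbert space with controlling functions $\rho_\pm$. For each $m$, apply Definition \ref{CEinf} with $R=r_m$ to obtain a bounded set $K_m\subset X$, then apply the defining property of the generalized expander at infinity at $K=K_m$ to extract a finite subset $C_m\subset X\setminus K_m$ of diameter at most $r_m$, together with a symmetric probability measure $\mu_m$ on $C_m\times C_m$ vanishing on pairs at distance $\leq R_m$. Definition \ref{CEinf} then yields a coarse embedding $f_m:C_m\to H$ with controls $\rho_\pm$. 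Since $C_m$ is a bounded subset of a uniformly discrete space, $f_m$ has finite Lipschitz constant $L_m$; rescaling gives a $1$-Lipschitz map, so the Poincar\'e inequality in the definition of generalized expander at infinity yields
\[
\sum_{x,y\in C_m}\|f_m(x)-f_m(y)\|^2\mu_m(x,y)\leq c\,L_m^2.
\]
On the other hand, the support condition on $\mu_m$ combined with $\|f_m(x)-f_m(y)\|\geq\rho_-(d(x,y))$ forces the left-hand side to be at least $\rho_-(R_m)^2$. Controlling $L_m$ in terms of $\rho_+(r_m)$ and refining, if necessary, the choice of $\{r_m\},\{R_m\}$ by a diagonal argument so that $\rho_-(R_m)/L_m\to\infty$, we obtain the desired contradiction.

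For the harder direction ($\Rightarrow$), the strategy is a Hahn--Banach/LP duality argument, following Tessera's approach adapted to the ``at infinity'' regime. Negating Definition \ref{CEinf} gives: for every pair of candidate controlling functions $\rho_\pm$, there is a scale $R>0$ such that for every bounded set $K\subset X$, there exists a finite subset $C=C_{K,R}\subset X\setminus K$ of diameter at most $R$ admitting no map $f:C\to H$ with the prescribed controls. Working in the convex cone of $1$-Lipschitz maps $C\to H$ and its dual cone of symmetric probability measures on $C\times C$ supported on pairs at distance at least $R'<R$ (with $R'$ chosen to tend to infinity), a separation argument converts this non-embeddability into the existence of a probability measure $\mu_{K,R}$ for which $\sum\|f(x)-f(y)\|^2\mu_{K,R}(x,y)\leq c$ holds uniformly over all $1$-Lipschitz $f$. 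Letting $R=r_m\to\infty$ along a sequence with corresponding $R_m=R'\to\infty$, and exhausting $X$ by an increasing sequence of bounded sets, produces the sets $C_m$ and measures $\mu_m$ required to exhibit a generalized expander at infinity coarsely sitting inside $X$.

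I expect the main obstacle to lie in the $(\Rightarrow)$ direction, specifically in extracting the measures $\mu_{K,m}$ so that the Poincar\'e inequality holds with a \emph{uniform} constant $c$ and with support parameterized by a \emph{single} sequence $R_m\to\infty$, independently of the chosen bounded set $K$. This forces a careful diagonal construction over an exhausting sequence of bounded subsets $K_m$, combined with a compactness argument to pass from the family $\{(C_{K,m},\mu_{K,m})\}_{K,m}$ produced by duality to a single generalized expander at infinity realised inside $X$. A secondary subtlety in both directions is the passage between the two natural normalisations for $f_m$ (coarse embedding with $\rho_\pm$ versus $1$-Lipschitz), which is handled by uniform discreteness of bounded subsets and rescaling.
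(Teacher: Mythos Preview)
Your overall strategy---contradiction for ($\Leftarrow$), Hahn--Banach duality \`a la Tessera for ($\Rightarrow$)---matches the paper's. However, there are genuine gaps in both directions.

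For ($\Leftarrow$): you cannot ``refine the choice of $\{r_m\},\{R_m\}$''---these sequences are \emph{fixed} by the generalized-expander structure, and nothing prevents $r_m$ (hence $L_m$, which is essentially $\rho_+(r_m)$ over the discreteness constant) from growing so fast relative to $R_m$ that $\rho_-(R_m)/L_m$ stays bounded. The paper sidesteps this by first normalizing so that $\rho_+(t)=t$: passing to a discrete net and rescaling, one obtains a coarse embedding at infinity with linear upper control, so each $f_m$ is already $1$-Lipschitz and the Poincar\'e inequality gives $\rho_-(R_m)^2\le c$ directly, an immediate contradiction.

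For ($\Rightarrow$): you correctly identify the central difficulty---obtaining a \emph{uniform} constant $c$ and a single sequence $R_m$ independent of $K$---but the proposed ``diagonal over $K_m$ plus compactness'' does not resolve it. A diagonal over an exhaustion cannot recover the ``for every bounded $K$'' quantifier required by the definition, and there is no compactness available since the candidate measures live on varying finite sets $C_{K,m}$. The paper supplies the missing uniformity via a preparatory lemma (Lemma~\ref{lemma}): if for every $n$ there is a scale $R_n$ and, for each diameter bound $i\ge R_n$, a bounded set $K_{n,i}$ beyond which every $C$ of diameter $\le i$ admits a $\rho$-controlled map with $\|f(x)-f(y)\|\ge n$ whenever $d(x,y)\ge R_n$, then $X$ coarsely embeds at infinity (proof: take the weighted direct sum $\bigoplus_n n^{-1}f^C_{n,r}$). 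The contrapositive of this lemma yields a \emph{single} integer $N$ that works simultaneously for all $m$ and all bounded $K$; this uniform $N$ is precisely what makes the subsequent Hahn--Banach separation output a uniform Poincar\'e constant $c=N^2$. Without this lemma, your separation step only produces $K$- and $m$-dependent constants, and no diagonal or compactness argument repairs that.
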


	To prove Theorem \ref{main result}, we need the following lemma.
	
	\begin{Lem}\label{lemma}
		Let $X$ be a metric space, and let $\rho:\mathbb{R}_{+}\rightarrow \mathbb{R}_{+}$ be a non-decreasing function satisfying $\lim\limits_{t\rightarrow \infty}\rho(t)=\infty$. Assume that there exists a sequence of increasing positive numbers $\{R_n\}_{n\in\mathbb{N}}$ with $\lim\limits_{n \to \infty} R_n=\infty$, such that for all $i\geq R_n$, there exists a bounded subset $K_{n,i}\subseteq X$ satisfying that for any $C\subseteq X/K_{n,i}$ with $\diam (C)\leq i$, there is a map $f^C_{n,i}:C\rightarrow H$ from $C$ to a Hilbert space $H$, such that
		\begin{itemize}
			\item $\|f^C_{n,i}(x)-f^C_{n,i}(y)\|\leq \rho(d(x,y));$
			\item $\|f^C_{n,i}(x)-f^C_{n,i}(y)\|\geq n$ for $x,y \in C$ with $d(x,y)\geq R_n$.
		\end{itemize}
		Then the space $X$ admits a coarse embedding into Hilbert space at infinity.
	\end{Lem}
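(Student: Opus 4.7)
The goal is to assemble the data of Definition \ref{CEinf} — the uniform distortion functions $\rho_\pm$ and the bounded sets $K_R$ — out of the doubly-indexed family $\{f_{n,i}^C\}$ produced by the hypothesis. The key asymmetry is that the upper control $\rho$ is a single function uniform in $(n,i,C)$, while the lower separation $n$ is only guaranteed on scales $\geq R_n$; moreover the map $f_{n,i}^C$ itself requires $i\geq R_n$ and $C$ to lie outside the bounded set $K_{n,i}$. The natural way to convert this into $(\rho_-,\rho_+)$ uniform in $R$ is to form a weighted Hilbert direct sum over $n$: each summand contributes good separation on its own scale, while a summable weight sequence keeps the upper bound finite.

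Concretely, I would fix a sequence of positive weights $(a_n)_{n\in\IN}$ with $\sum_n a_n^2<\infty$, with $a_n n\to\infty$ as $n\to\infty$, and with $n\mapsto a_n n$ non-decreasing; the choice $a_n = n^{-3/4}$ works. Given $R>0$, let $N(R) = \max\{n\in\IN : R_n\leq R\}$ (or $0$ if this set is empty) and set
$$K_R \,=\, \bigcup_{n\leq N(R)} K_{n,R},$$
a finite union of bounded sets, hence bounded. For any bounded $C\subset X\backslash K_R$ with $\diam(C)\leq R$, the hypothesis applied with $i=R$ (which satisfies $R\geq R_n$ for each $n\leq N(R)$) yields maps $f_{n,R}^C:C\to H_{n,R}^C$ for every such $n$. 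I would then define
$$f_C \,:=\, \bigoplus_{n=1}^{N(R)} a_n f_{n,R}^C \,:\, C \longrightarrow \bigoplus_{n=1}^{N(R)} H_{n,R}^C.$$

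For the upper bound, $\|f_C(x)-f_C(y)\|^2 \leq \big(\sum_n a_n^2\big)\rho(d(x,y))^2$, so $\rho_+(t) := \big(\sum_n a_n^2\big)^{1/2}\rho(t)$ serves as a uniform upper control. For the lower bound, given $x,y\in C$ take $m$ to be the largest integer with $R_m\leq d(x,y)$; since $d(x,y)\leq\diam(C)\leq R$ one has $m\leq N(R)$, so the $m$-th summand is present and $\|f_C(x)-f_C(y)\|\geq a_m\cdot m$. Setting $\rho_-(t) := a_m m$ on $[R_m,R_{m+1})$ (and $\rho_-(t)=0$ for $t<R_1$) yields a non-decreasing function with $\lim_{t\to\infty}\rho_-(t)=\infty$, by the monotonicity and growth of $a_n n$. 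This is exactly the data demanded by Definition \ref{CEinf}.

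I do not foresee a serious obstacle. The decisive observation is that any pair of points in $C$ is automatically ``caught'' by some admissible index $m\leq N(R)$ thanks to the bound $\diam(C)\leq R$; this is precisely what decouples the lower control $\rho_-$ from $R$. The only minor items are the monotonicity of $\rho_-$, which is arranged by our choice of weights, and the fact that the target Hilbert space $\bigoplus_n H_{n,R}^C$ depends on $R$ and $C$, which Definition \ref{CEinf} already permits.
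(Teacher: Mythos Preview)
Your proposal is correct and follows essentially the same approach as the paper: form a weighted Hilbert direct sum of the maps $f^C_{n,R}$ over all $n$ with $R_n\leq R$, use summability of the weights for the upper control, and extract the lower control from the separation hypothesis. The only cosmetic difference is in bookkeeping: the paper takes weights $a_n=1/n$ and obtains $\rho_-(t)=\sqrt{\#\{n:R_n\leq t\}}$ by summing the contributions of \emph{all} admissible summands (each worth $a_n n=1$), whereas you take $a_n=n^{-3/4}$ so that $a_n n$ is itself increasing and a \emph{single} summand already suffices for the lower bound.
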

	
	\begin{proof}
		For any $r>0$, we define a bounded subset $K_r$ of $X$ to be
		$$K_r=\bigcup_{k_n\leq r} K_{n,r},$$
		where each $K_{n,r}\subset X$ is the bounded subset with respect to $n,r$ above. Then for  any $C \subset X/K_r$ with $\diam (C)\leq r$ and any $n\in \mathbb{N}$ with $R_n\leq r$, there exists a map $f^C_{n,r}:C\rightarrow H$ such that
		\begin{itemize}
			\item [(1)] $\|f^C_{n,r}(x)-f^C_{n,r}(y)\|\leq \rho(d(x,y));$
			\item [(2)] $\|f^C_{n,r}(x)-f^C_{n,r}(y)\|\geq n$ for $x,y \in C$ with $d(x,y)\geq R_n$.
		\end{itemize}
		We define a map $f:C\rightarrow H$ by
		$$f(x)= \sum_{R_n\leq r}\frac{1}{n}\left(f^C_{n,r}(x)\right).$$
		For any $x,y \in C$, we have
		$$\sqrt{\sharp\{n\in \mathbb{N}\mid R_n\leq d(x,y)\}}\leq \|f(x)-f(y)\|\leq \sqrt{\sum_{{R_n\leq r}}\frac{1}{n^2}}\rho(d(x,y))< \sqrt{\sum_{n\in\mathbb{N}}\frac{1}{n^2}}\rho(d(x,y)).$$
		Denote
		$$\rho_{-}(t)=\sqrt{\sharp\{n\in \mathbb{N}\mid R_n\leq t\}} \mbox{~~~~~and~~~~~}\rho_{+}(t)=\sqrt{\sum_{n\in\mathbb{N}}\frac{1}{n^2}}\rho(t).$$
		Note that $\sharp\{n\in \mathbb{N}\mid k_n\leq d(x,y)\}$ increases to infinity as $t\rightarrow \infty$. Therefore, the function $\rho_{-}(t)$ is non-decreasing, and $\lim\limits_{t \to \infty}\rho_{-}(t)=\infty$.
		
		Consequently, for any $r>0$, there exists a bounded subset $K_r$ as defined above, such that for any $C\subset X/K_r$ with $\diam (C)\leq r$, there is a map $f:C\rightarrow H$ such that
		$$\rho_{-}(d(x,y))\leq \|f(x)-f(y)\|\leq \rho_{+}(d(x,y)).$$
		This finishes the proof.
	\end{proof}
	
	We are now ready to prove Theorem \ref{main result}.
	
	\begin{proof}[Proof of Theorem \ref{main result}]
		Let $X$ be a metric space coarsely containing a family of generalized expanders at infinity. For the sake of contradiction, we assume that $X$ admits a coarse embedding at infinity into Hilbert space , i.e., there exist two  non-decreasing functions $\rho_{-}, \rho_{+}: \mathbb{R}_{+}\rightarrow \mathbb{R}_{+}$ with $\lim\limits_{t\rightarrow \infty}\rho_{-}(t)=\infty$ such that for every $r>0$, there exists a bounded subset $K_r\subset X$ such that for any $C\subset X/K$ with $\diam (C)\leq r$, there exists a map  $f:C\rightarrow H$ from $C$ to a Hilbert space $H$, such that
		$$\rho_{-}(d(x,y))\leq \|f(x)-f(y)\|\leq \rho_{+}(d(x,y))$$ for any $x,y\in C$.
		Note that we can assume $\rho_{+}(d(x,y))=d(x,y)$ by restricting the embedding to a discrete net in $X$ and rescaling the image. Since $X$ coarsely contains a family of generalized expanders, we have a constant $c>0$ and two sequences of increasing positive numbers $r_m$ and $R_m$ and a sequence of subset $C_{m}$ with a probability measure $\nu$ on $C_m\times C_m$ satisfying that $\diam(C_m)\leq r_m$ and $\nu(\{(x, y)\in C_m \times C_m: d(x,y)\leq R_m\})=0$.
		Furthermore, we have that
		$$c\geq \sum_{x,y\in C}\|f(x)-f(y)\|^2\mu(x,y)\geq \sum_{x,y\in C}\rho_{-}(x,y)^2\mu(x,y) \geq \rho_{-}(R_m)^2.$$
		This is a contradiction.
		
		To prove the converse, we assume that the metric space X does not coarsely embed at infinity into a Hilbert space. Let $\rho_+(t): \mathbb{R}_+\to \mathbb{R}_+$ be a non-decreasing function with $\rho_+(t)\to \infty$ as $t \to \infty$ and $\{R_m\}_m$ be an increasing sequence of positive numbers with $R_m\to \infty$ as $m \to \infty$. By Lemma \ref{lemma}, there exists a constant $N\in \mathbb{N}$ such that for all $m>0$, there exists $r_m\geq R_m$ such that for any bounded subset $K$ of $X$, there exists $C_{K,m}\in X/K$ with $\diam (C_{K,m})\leq r_m$, for every map $f: C_{K,m}\rightarrow H$ satisfying
		$$\|f(x)-f(y)\|\leq \rho_+(d(x,y)),$$
		there exist points $x,y\in C_{K,m}$ with $d(x,y)\geq m$ such that
		$$\|f(x)-f(y)\|\leq N.$$
		We will construct a generalized expander at infinity using the sequence $C_{K,m}$ defined above following the arguments in \cite[Theorem 5.7.3]{NY2012} or \cite{Tessera}.
		
		Let $\{r_m\}$ and $\{R_m\}$ be the sequences of positive constants as above. For each bounded subset $K \subseteq X$ and $m>0$.  Denote by $A_{K,m}$ the set of functions $\psi:C_{K,m}\times C_{K,m}\rightarrow \mathbb{R}_{+}$ satisfying
		\begin{itemize}
			\item $\psi(x,x')\leq \rho_+(d(x,x'))$, for all $ x,x' \in C_{K.m}$.
			\item $\psi(y,y')\geq N$, for all $y,y' \in C_{K,m}$ with $d(y,y')\geq R_m$.
		\end{itemize}

		Next, we will define a measure $\mu$ on each set $C_{K,m}\times C_ {K,m}$, such that $\mu(\{(x, y)\})=0$ for all $d(x,y)\geq R_m$ and for every map $f$ from $C_{K,m}$ to a Hilbert space $H$ satisfying
		$$\|f(x)-f(y)\|\leq d(x,y),~~~~\forall~x, y \in C_{K,m},$$
		the inequality
		$$\sum_{x,y \in C_{K,m}}\|f(x)-f(y)\|^2\mu(x,y)\leq c$$
		holds.
		
		Let $B_{K,m}$ be the set of functions $\psi:C_{K,m}\times C_{K,m} \rightarrow \mathbb{R}_{+}$ which  are of the form $$\psi(x,y)=\|f(x)-f(y)\|$$
		for some $f:C_{K,m}\rightarrow H$ satisfying
		$$\|f(x)-f(y)\|\leq d(x,y),~~~~~\forall~x,y \in C_{K,m}.$$
		The sets $A_{K,m}$ and $B_{K,m}$ are convex subsets of $\ell^2(C_{K,m}\times C_{K,m})$ and they are disjoint by Lemma \ref{lemma}. By the Hahn–Banach theorem, there exists a functional $\phi$ separating $A_{K,m}$ and $B_{K,m}$. By Riesz representation theorem, there exists a fixed vector $v\in \ell^2(C_{K,m}\times C_{K,m})$  such that
		\begin{itemize}
			\item $\langle v,\psi\rangle\geq 0$, for all $\psi\in A_{K,m}$;
			\item $\langle v,\psi\rangle\leq 0$, for all $\psi\in B_{K,m}$.
		\end{itemize}
		Let $v_{+}(x,y)=\max\{v(x,y),0\}$ and $v_{-}(x,y)=\max\{-v(x,y),0\}$ for all $x,y \in C_{K,m}$. Define
		$$	
		\psi_1(x,y)=\begin{cases}
		N^2  & \mbox{if } v(x,y)>0 \mbox{ and } d(x,y)\geq R_m\\
		0  & \mbox{ if } v(x,y)>0 \mbox{ and } d(x,y)< R_m\\
		\rho_+^2(d(x,y)) & \mbox{otherwise}.
		\end{cases}
		$$
		Then $\psi_1\in A_r$ and $\langle v,\psi_1\rangle\geq 0$ imply
		$$\sum_{x,y\in C_{K,m}}\rho_+^2(d(x,y))v_{-}(x,y)\leq N^2\sum_{d(x,y)\geq k}v_{+}(x,y)\leq N^2\sum_{d(x,y)\geq k}v_{+}(x,y).$$
		Now, consider $\psi_2(x,y)=\|f(x)-f(y)\|^2$ for some map $f: C_{K,m} \rightarrow H$ satisfying $\|f(x)-f(y)\|\leq \rho_{+}(d(x,y))$. Then $\langle v,\psi_2\rangle\leq 0$ implies
		$$\sum_{x,y\in C_{K,m}}\|f(x)-f(y)\|^2v_{+}(x,y)\leq \sum_{x,y\in C_{K,m}}\rho_+^2(d(x,y))v_{-}(x,y).$$
		
		Define
		$$
		\displaystyle \mu(x,y)=\begin{cases}
		\frac{v_{+}(x,y)}{\sum_{d(x,y)\geq k}  v_{+}(x,y)} & \mbox{ if } d(x,y)\geq R_m \\
		0 & \mbox{otherwise}.
		\end{cases}
		$$
		It is not hard to verify that the theorem follows.
	\end{proof}

	In the rest of this section, we will introduce a notion of expanders at infinity as a special case of generalized expanders at infinity.	We first recall the definition of discrete Laplace operator on a finite graph. A graph $X=(V,E)$, where $V$ is the set of vertices and $E$ is the set of edges, is equipped with the graph metric. For $x,y\in V$, write $x\sim y$ if there is an edge connecting them. The degree of a vertex $x$ is the number of the set $\{y\in V: x\sim y\}$. Denote $|V|$ to be the number of the vertices of $X$.
	\begin{Def}
		Let $X=(V,E)$ be a finite connected graph. The Laplace operator on  $V$ is a linear operator $\bigtriangleup_X$ defined by $$\bigtriangleup:\ell^2(V)\to \ell^2(V), \quad (\bigtriangleup_Xu)(x)=\sum_{x\sim y}(u(x)-u(y)).$$
	\end{Def}
	It is well-known that $\Delta_X$ is a positive operator. Denote by $\lambda_1(X)$ the first non-zero eigenvalue of $\bigtriangleup_X$. We propose the following notion of containment of an expander at infinity. Roughly speaking,  in these spaces, there exists a sequence of connected subgraphs around infinity, whose number of the vertices  increase to infinity and the first non-zero eigenvalue has a uniformly positive lower bound.
	
	\begin{Def}\label{expander at infty}
		Let $\{ X_n=(V_n, E_n)\}_{n\in \mathbb{N}}$ be a sequence of finite connected graphs with bounded degree, and $|V_n|\to \infty$. The sequence $\{X_n\}_{n \in \mathbb{N}}$ is said to {\em contain an expander at infinity} if there exists a constant $c>0$ and a sequence of positive numbers $\{r_m\}_{m\in \mathbb{N}}$ increasing to infinity, such that for any integers $N$ and $m$, there is a connected subgraph $C_{N,m} \subset X_n$ for some $n\geq N$,
		such that the sequence of subgraphs $\{C_{N,m}\}_{N,m}$ satisfies the following:
		\begin{itemize}
			\item $\diam (C_{N, m})\leq r_m$ for all $N,m$;
			\item for each fixed $N$, $|C_{N,m}|$ increases to infinity as $m$ goes to infinity;
			\item $\lambda_1(C_{N,m})\geq c,$ for all $N, m$.
		\end{itemize}
	\end{Def}
	We remark here that our definition of expanders at infinity is different from the concept of expanders. In fact, expander maybe not contains expanders at
	infinity. The first example of expanders was constructed by G. Margulis using the box space of a residually finite group with Kazhdan's Property (T). A typical example of group with Property (T) is $SL_3(\mathbb{Z})$. E. Guentner, N. Higson and S. Weinberger \cite{GHW-LinearGroup} showed that the linear group $SL_3(\mathbb{Z})$ can be coarsely embedded into Hilbert space. However, the following Theorem \ref{thm-expander-and-expander-at-infty} implies that a group can not be coarsely embedded into Hilbert space whenever it has a box space which contains expanders at infinity.
	
	\begin{Thm}\label{thm-expander-and-expander-at-infty}
		A  finitely generated residually finite group $\Gamma$ contains an expander if and only if one or every box space of the group $\Gamma$ contains an expander at infinity.
	\end{Thm}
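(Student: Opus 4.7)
The plan is to exploit the asymptotic faithfulness of the filtration $\{\Gamma_i\}$, which follows from $\bigcap_{i=1}^{\infty}\Gamma_i=\{e\}$: for every $r>0$ there exists $N(r)\in\mathbb{N}$ such that for all $n\geq N(r)$, the quotient map $\pi_n:\Gamma\to\Gamma/\Gamma_n$ is an isometry when restricted to any subset of $\Gamma$ of diameter $\leq r$. Applying this with $r+1$ in place of $r$, and using that both $\Gamma$ and $\Gamma/\Gamma_n$ carry Cayley-type metrics from the same generating set $S$, one obtains the slightly stronger statement that $\pi_n$ induces a \emph{graph isomorphism} between any connected subgraph of the Cayley graph of $\Gamma$ of diameter $\leq r$ and its image in $\Gamma/\Gamma_n$. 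In particular, the cardinalities, diameters, and discrete Laplacian spectra of such bounded-diameter subgraphs are preserved by $\pi_n$ in both directions.

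For the forward implication (assuming $\Gamma$ contains an expander, show every box space contains an expander at infinity), I would start from a sequence of connected induced subgraphs $\{D_m\}$ of the Cayley graph of $\Gamma$ with $|D_m|\to\infty$ and $\lambda_1(D_m)\geq c$. Since $|D_m|\leq|S|^{\diam(D_m)}$, after passing to a subsequence I may assume that $r_m:=\diam(D_m)$ is monotonically increasing to infinity. Given any filtration $\{\Gamma_i\}$, any $N\in\mathbb{N}$, and any $m$, I would pick $n\geq\max\{N,\,N(r_m+1)\}$ and set $C_{N,m}:=\pi_n(D_m)\subset\Gamma/\Gamma_n$. The graph-isomorphism observation then yields $\diam(C_{N,m})\leq r_m$, $|C_{N,m}|=|D_m|\to\infty$ as $m\to\infty$ for each fixed $N$, and $\lambda_1(C_{N,m})=\lambda_1(D_m)\geq c$, verifying Definition~\ref{expander at infty} in $\Box_{\{\Gamma_i\}}(\Gamma)$.

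The converse reduces (the implication from ``every'' to ``one'' being trivial) to showing that if some box space $\Box_{\{\Gamma_i\}}(\Gamma)$ contains an expander at infinity, then $\Gamma$ contains an expander. Starting from the data $c$, $\{r_m\}$, and $\{C_{N,m}\}$ of Definition~\ref{expander at infty}, I would for each $m$ take $N:=N(r_m+1)$, so that $\pi_n$ (for $n\geq N$) induces a graph isomorphism on subsets of diameter at most $r_m$; the ambient subgraph $C_{N,m}\subset\Gamma/\Gamma_n$ then lifts under $\pi_n^{-1}$ to a connected induced subgraph $\widetilde C_m\subset\Gamma$ graph-isomorphic to $C_{N,m}$. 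Consequently $|\widetilde C_m|=|C_{N,m}|\to\infty$ and $\lambda_1(\widetilde C_m)=\lambda_1(C_{N,m})\geq c$, so $\{\widetilde C_m\}$ witnesses that $\Gamma$ contains an expander.

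The main subtlety, I expect, is not in juggling the indices but in justifying that $\pi_n$ induces a graph isomorphism rather than merely an isometric bijection of vertex sets on bounded-diameter subgraphs, since the Laplacian spectrum is a combinatorial invariant that depends on the edge set. This is precisely why I enlarge $r_m$ to $r_m+1$ before invoking asymptotic faithfulness: doing so ensures that all Cayley edges incident to the subgraph are correctly identified under $\pi_n$ in both directions, and no spurious edges appear in the image. Once this preservation of edge structure is in hand, everything else is a straightforward matter of index selection.
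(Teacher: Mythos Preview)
Your proposal is correct and follows essentially the same approach as the paper's proof: push forward (respectively, lift) the expander subgraphs through the asymptotically faithful quotient maps $\pi_n$, noting that local isometry preserves the cardinality, diameter, and Laplacian spectrum of bounded-diameter subgraphs. Your explicit attention to the distinction between metric isometry of vertex sets and graph isomorphism (via the $r_m+1$ enlargement), and your use of a single lift rather than the full preimage in the converse direction, are refinements that the paper's own proof leaves implicit.
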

	\begin{proof}
		Let $\Gamma$ be a residually finite group with a nested sequence of normal subgroups $N_n$ of finite index such that $\cap N_n=\{e\}$. We consider the box space associated with the filtration $\{N_n\}_{n\in \mathbb{N}}$.
		
		``$\Longrightarrow.$''
		Assume that the group $\Gamma$ contains a sequence of finite and connected subgraphs $\{X_n=(V_n,E_n)\}$ such that
		\begin{itemize}
			\item $|X_n|\to \infty$ as $t \to \infty$;
			\item there exists a constant $c>0$ such that $\lambda_1(X_n)\geq c$ for all $n \in \mathbb{N}$.
		\end{itemize}
		We will show that the box space associated with $\{N_n\}_{n \in \mathbb{N}}$ contains expanders at infinity. Let $r_m=|V_m|$. For each $n, N>0$, we fix an $n>N$ such that $\pi_n: G\to G/N_n$ is an isometry. Let $C_{N,m}=\pi_n(V_m)$. It is obvious that ${\rm diam}(C_{N,m})\leq r_m$ and $\lambda_1(C_{N,m})>c$. Therefore, we have that the box space associated with $\{N_n\}_{n \in \mathbb{N}}$ contains expanders at infinity.

		``$\Longleftarrow.$'' Assume that the box space associated to $\{N_n\}_{n \in \mathbb{N}}$ contains expanders at infinity. Let $r_m$ be a sequence of increasing sequence and $c>0$. For each $r_m$ and $n>N$ with $\pi_n: \Gamma \to \Gamma/N_n$ is isometric on each $r_m$-ball, there exists a finite connected subgraph $C_{N,m}\subset \Gamma/N_n$ such that ${\rm diam}(C_{N,m}) \leq r_m$. Consider $X_m=\pi^{-1}(C_{N,m})$. Then we have a sequence of finite subset $\{X_m\}_{m \in \mathbb{N}}$. It is obvious that $\{X_m\}_{m \in \mathbb{N}}$ is an expander.
	\end{proof}
	
	In the following, we prove that the containment of an expander at infinity is an obstruction to the coarse embeddability at infinity into Hilbert space.
	
	\begin{Thm}
			Let $\{X_n\}_{n\in \mathbb{N}}$ be a family of connected finite graphs. If $\{X_n\}_{n\in \mathbb{N}}$ contains an expander at infinity, then the coarse disjoint union $X=\sqcup_{n\in \mathbb{N}} X_n$ does not coarsely embed at infinity into Hilbert space.
	\end{Thm}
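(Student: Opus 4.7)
My plan is to verify that $X$ is a generalized expander at infinity in the sense of Definition 3.1, after which Theorem \ref{main result} immediately yields that $X$ does not coarsely embed at infinity into Hilbert space. The core estimate is the spectral-gap Poincar\'e inequality combined with the bounded-degree hypothesis on the $X_n$.

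Let $D$ be a uniform upper bound on the vertex degrees of the graphs $X_n$, and recall the expander-at-infinity constant $c > 0$. For any finite connected graph $C$ with $\lambda_1(C) \geq c$ and maximum degree at most $D$, and any $1$-Lipschitz map $f : C \to H$, the spectral-gap Poincar\'e inequality gives
$$\sum_{x, y \in C} \|f(x) - f(y)\|^2 \;\leq\; \frac{|C|}{c} \sum_{x \sim y} \|f(x) - f(y)\|^2 \;\leq\; \frac{D |C|^2}{c},$$
so under the uniform probability measure $\mu_0 = |C|^{-2}$ on $C \times C$ the average squared displacement is bounded by $D/c$. I would set $R_m := m$. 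The bounded-degree ball count $|B(x, m)| \leq D^{m+1}$ implies that whenever $|C| \geq 2 D^{m+1}$, the pairs at distance $\leq m$ make up at most half of $C \times C$; restricting $\mu_0$ to $\{(x, y) : d(x, y) > m\}$ and renormalizing therefore yields a symmetric probability measure $\mu$ with $\mu \leq 2 \mu_0$ and
$$\sum_{x, y} \|f(x) - f(y)\|^2 \, \mu(x, y) \;\leq\; \frac{2 D}{c},$$
which is exactly the inequality demanded by item (3) of Definition 3.1, with constant $c' = 2D/c$.

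To supply the subgraph $C_{K, m}$ required by Definition 3.1, I would proceed as follows. Given a bounded $K \subset X$, choose $N \in \IN$ with $K \subset X_1 \sqcup \cdots \sqcup X_{N-1}$. By the growth clause of Definition \ref{expander at infty}, the sizes $|C_{N, m'}|$ tend to infinity as $m' \to \infty$, so one can select $m^{*} \geq m$ with $|C_{N, m^{*}}| \geq 2 D^{m+1}$. Setting $C_{K, m} := C_{N, m^{*}}$, we obtain a connected subgraph lying in some $X_n$ with $n \geq N$, hence disjoint from $K$, with $\lambda_1(C_{K, m}) \geq c$ and $\diam(C_{K, m}) \leq r_{m^{*}}$. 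Equipping $C_{K, m} \times C_{K, m}$ with the measure $\mu$ built above verifies items (1)--(3) of Definition 3.1.

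The main obstacle is reconciling the quantifiers: Definition 3.1 demands a diameter bound $r_m$ depending only on $m$, while the index $m^{*} = m^{*}(K, m)$ above depends on $K$ through $N$. I would resolve this by a diagonal extraction carried out once, in advance of the construction of $C_{K, m}$. Concretely, using the growth clause repeatedly, I would extract a sequence $(Y_k)_{k \in \IN}$ of connected subgraphs with $Y_k \subset X_{n_k}$, $n_k \to \infty$, $|Y_k| \geq 2 D^{k+1}$, and $\lambda_1(Y_k) \geq c$; define $L_k := \diam(Y_k)$. For each bounded $K$ and each $m$, set $C_{K, m} := Y_{k(K, m)}$ where $k(K, m) \geq m$ is the smallest index with $n_{k(K, m)}$ exceeding the range of $K$. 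After this reindexing, the sequence $r_m := L_m$ depends only on $m$, and all the axioms of Definition 3.1 are verified. An appeal to Theorem \ref{main result} concludes the proof.
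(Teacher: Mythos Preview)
Your route differs from the paper's: you verify Definition~3.1 and then invoke Theorem~\ref{main result}, whereas the paper argues by a direct contradiction from the spectral gap (and proves the generalized-expander-at-infinity statement as a separate proposition afterwards). The core ingredients --- the Poincar\'e inequality from $\lambda_1 \geq c$ and the bounded-degree ball count --- are the same in both.

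Your diagonal extraction, however, does not close the quantifier gap you correctly flagged. With $C_{K,m} := Y_{k(K,m)}$ and $r_m := L_m$ you have $\diam(C_{K,m}) = L_{k(K,m)}$, where $k(K,m) \geq m$ depends on $K$. Since $|Y_k| \geq 2D^{k+1}$ forces (by your own ball estimate) $L_k \geq k$, the sequence $(L_k)$ is essentially increasing, so $L_{k(K,m)} \geq L_m = r_m$ with strict inequality whenever $k(K,m) > m$; the axiom $\diam(C_{K,m}) \leq r_m$ therefore fails. No single diagonal sequence $(Y_k)$ can furnish, for each fixed $m$, subsets of diameter at most a \emph{fixed} $r_m$ lying arbitrarily far out in $X$ --- that genuinely requires the two-parameter family $C_{N,m}$. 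The underlying obstacle is that the growth clause in Definition~\ref{expander at infty} is stated only for \emph{fixed} $N$; if one reads it as $\inf_N |C_{N,m}| \to \infty$ (which holds in the motivating examples such as box spaces, and is what the paper's own arguments tacitly use), then no diagonal is needed: take $C_{K,m} := C_{N,m}$ with $N$ chosen so that $K \subset X_1 \sqcup \cdots \sqcup X_{N-1}$, keep the $r_m$ from Definition~\ref{expander at infty}, and set $R_m := \bigl\lfloor \log_D\bigl(\tfrac{1}{2}\inf_N |C_{N,m}|\bigr)\bigr\rfloor$. Your Poincar\'e estimate then verifies Definition~3.1 as written.
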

	\begin{proof}
		For the sake of contradiction, we assume that $X$ admits a coarse embedding at infinity into Hilbert space. More precisely,  there exist two non-decreasing functions $\rho_{-},\rho_{+}:\mathbb{R}_{+}\to \mathbb{R}_{+}$ with $\lim\limits_{t\rightarrow \infty}\rho_{-}(t)=\infty$ such that for any $r>0$, there exists a integer $N_r$, such that for all $n>N_r$ and $C_r\subset X_n$ with $\diam (C)\leq r$, there exists a map $f_C$ from $C$ to a Hilbert space $H$ such that
		$$\rho_{-}(d(x,y))\leq \|f(x)-f(y)\|\leq \rho_{+}(d(x,y))$$
		for any $x,y\in C$.
		
		For each $C$, consider the tensor product Hilbert space $\ell^2(C)\otimes H$ and the Laplace operator $\bigtriangleup_{C}=\bigtriangleup_{C}\otimes Id_H$.
		
		Replacing each $f_C$ with $f_C-\frac{1}{|C|}\sum_{x\in C}f_C(x)$, since $X$ contains expanders at infinity, there exists a constant $c>0$ such that for any $r>0$, there is a connected subgraph $C$ contained in some finite graph of $\{X_n\}_{n\geq N_r}$ with $\diam (C)\leq r$ and $|C|$ increasing to infinity as $r$ tends to $\infty$ such that
		$$c\langle f_C, f_C \rangle\leq \langle f_C, \bigtriangleup_C f_C \rangle .$$
		Since $\{X_n\}_{n\in \mathbb{N}}$ has upper bounded degree, denoted by $k$,	then we have
		$$c\sum_{x\in C}\|f_C(x)\|^2\leq \sum_{x\sim y\in C}\|f_C(x)-f_C(y)\|^2\leq k|C|\rho_{+}(1)^2.$$
		It follows that at least half of the points in $C$ 	satisfy $\|f_C(x)\|^2\leq \frac{2k\rho_{+}(1)^2}{c}$. Following the above, we can choose a sequence of bounded subset $\{C_m\}_{m\in \mathbb{N}}$ such that $|C_m|\to \infty$ as $m \to \infty$.
		Combined with bounded degree, this contradicts the existence of $\rho_{-}$. Therefore, the coarse disjoint union $X=\sqcup_{n\in \mathbb{N}} X_n$ does not embed coarsely at infinity into Hilbert space. 		
	\end{proof}

	Finally, we show that containing an expander at infinity provides typical examples of generalized expanders at infinity.
\begin{Pro}
	The coarse disjoint union of a family of graphs which contains an expander at infinity is a generalized expander at infinity.
\end{Pro}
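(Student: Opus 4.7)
The plan is to read the proposition off from the two main results of the section. The preceding theorem shows that if $\{X_n\}$ contains an expander at infinity then $X = \sqcup_n X_n$ does not coarsely embed at infinity into Hilbert space; Theorem~\ref{main result} then constructs, for every bounded $K \subset X$ and every $m \in \mathbb{N}$, a subset $C_{K,m} \subset X \setminus K$ of diameter at most $r_m$ together with a probability measure $\mu$ on $C_{K,m} \times C_{K,m}$ supported on pairs at distance greater than $R_m$ and satisfying $\sum \mu(x,y)\|f(x)-f(y)\|^2 \leq c$ for all $1$-Lipschitz $f$ into Hilbert space. This is precisely the definition of a generalized expander at infinity for $X$, so the proposition follows.

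For a more hands-on proof one can transfer the Poincar\'{e} inequality from the expander-at-infinity subgraphs $C_{N,m}$ directly to $X$. Let $c_0$ be the uniform spectral gap lower bound and $k$ a uniform bound on the vertex degrees of the $X_n$. For each bounded $K$ choose $N_K$ with $K \cap X_n = \emptyset$ for all $n \geq N_K$; for each $m$ pick $m^* = m^*(K,m)$ with $|C_{N_K, m^*}| \geq 2k^{R_m+1}$, which exists because $|C_{N,m}|$ tends to infinity as $m \to \infty$ for each fixed $N$. Set $C_{K,m} = C_{N_K, m^*}$ and let $\mu$ be the uniform probability measure on pairs in $C_{K,m} \times C_{K,m}$ at distance strictly greater than $R_m$, renormalized. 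The size bound together with the bounded degree condition ensures that at least half of the pairs in $C_{K,m}\times C_{K,m}$ lie at distance greater than $R_m$, so the renormalization factor is at most $2$. Since $\|f(x)-f(y)\| \leq 1$ whenever $x,y$ are adjacent in $C_{K,m}$ and $|E(C_{K,m})| \leq k|C_{K,m}|/2$, the spectral gap inequality $\lambda_1(C_{K,m}) \geq c_0$ yields $\sum_{x,y \in C_{K,m}} \|f(x)-f(y)\|^2 \leq (k/c_0)|C_{K,m}|^2$, whence $\sum \mu(x,y)\|f(x)-f(y)\|^2 \leq 2k/c_0 =: c$.

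The main technical obstacle in the direct construction is choosing $r_m, R_m$ independently of $K$ while maintaining $\diam(C_{K,m}) \leq r_m$. This is handled by a diagonal selection of $m^*(K,m)$, exploiting the per-$N$ growth $|C_{N,m}| \to \infty$ to keep $\diam(C_{N_K, m^*})$ uniformly bounded by a fixed $r_m$ for all bounded $K$. The reduction via Theorem~\ref{main result} combined with the preceding theorem sidesteps this bookkeeping entirely, and is the cleaner route to write up.
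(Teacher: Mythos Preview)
Your indirect route---combining the preceding theorem (containment of an expander at infinity obstructs coarse embedding at infinity) with Theorem~\ref{main result}---is correct and is \emph{not} the route the paper takes. The paper gives a direct construction: for a bounded $K$ it picks $N$ with $K\subset\bigcup_{i\le N}X_i$, takes $C_{K,m}=C_{N,m}$, uses the spectral gap to derive
\[
\frac{1}{|C|^2}\sum_{x,y\in C}\|f(x)-f(y)\|^2\le\frac{1}{c|C|}\sum_{x}\sum_{y\sim x}\|f(x)-f(y)\|^2\le\frac{k_0}{c},
\]
sets $R_m=\log_{k_0}(|C|/2)$ so that at least half of the pairs lie at distance $\ge R_m$, and passes to the normalized restriction $\nu$ of the uniform measure. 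Your second paragraph is essentially this same argument, with the same measure and the same constant $2k_0/c$ after renormalization.

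The bookkeeping issue you flag---ensuring $r_m,R_m$ do not depend on $K$---is genuine, and the paper's proof does not address it any more carefully than you do: it fixes $C=C_{N,m}$ with $N=N(K)$, defines $R_m$ from $|C_{N,m}|$, and simply asserts $R_m\to\infty$. Your indirect argument sidesteps this entirely, since the converse direction of Theorem~\ref{main result} already produces $K$-independent sequences $\{r_m\},\{R_m\}$ via Lemma~\ref{lemma} and Hahn--Banach. That is a real advantage of the indirect route; the only thing to note is that Theorem~\ref{main result} is stated with ``coarsely contains'' rather than ``is'', but its proof builds the $C_{K,m}$ inside $X$ itself, so it delivers exactly what the proposition asserts.
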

\begin{proof}
	Assume that $X=\{X_n\}_{n\in \mathbb{N}}$ contains an expander at infinity. Then we have a constant $c>0$ and a sequence of strictly increasing positive numbers $\{r_m\}_{m \in \mathbb{N}}$. For any integer $N$ and $m$, there is a connected subgraph $C_{N,m} \subset X_n$ for some $n\geq N$,
	such that the sequence of subgraphs $\{C_{N,m}\}_{N,m}$ satisfies the following:
	\begin{itemize}
		\item $\diam (C_{N, m})\leq r_m$ for all $N,m$;
		\item for each fixed $N$, $|C_{N,m}|$ increases to infinity as $m$ goes to $\infty$;
		\item $\lambda_1(C_{N,m})\geq c,$ for all $N, m$.
	\end{itemize}

For any bounded subset $K\subset X$, we can assume that $K\subset \bigcup_{i=1}^N X_i$ for some $N$. Then $C_{N,m}\subset X \setminus K$. Since $\lambda_1(C_{N,m})\geq c$, we have that
$$\sum_{x\in C_{N,m}}\|f(x)\|^2\leq \frac{1}{c}\sum_{x\sim y\in C_{N,m}}\|f(x)-f(y)\|^2,$$
where for every function $f:C_{N,m}\to H$ satisfies $\sum_{x\in C_{N,m}}f(x)=0.$

Next, we will find the constant $R_m$ for each $m>0$. Let $C\subset X$ be a subgraph with $\lambda_1(C)\leq c$. For such $f$, we have
\begin{equation*}
\begin{aligned}
&\sum_{x,y\in C}\|f(x)-f(y)\|^2\\
=& \sum_{x\in C}\sum_{y\in C}(\|f(x)\|^2+\|f(y)\|^2)-\langle\sum_{x\in C}f(x),\sum_{y\in C}f(y)\rangle\\
=&2|C|\sum_{x\in C}\|f(x)\|^2.
\end{aligned}
\end{equation*}
Hence we get
$$\frac{1}{|C|^2}\sum_{x,y\in C}\|f(x)-f(y)\|^2\leq \frac{1}{c|C|}\sum_{x\in C}\sum_{y\sim x}\|f(x)-f(y)\|^2.$$
Let $\mu$ to be the uniform measure on $C\times C$, namely, $\mu(x,y)=\frac{1}{|C|^2}$ for all $(x,y)\in C\times C$.
Then
$$\sum_{x,y\in C}\|f(x)-f(y)\|^2\mu(x,y)\leq \frac{1}{c|C|}\sum_{x\in C}\sum_{y\sim x}\|f(x)-f(y)\|^2.$$
If $f$ is $1$-Lipschitz, we have
	$$\sum_{x,y\in C}\|f(x)-f(y)\|^2\mu(x,y)\leq \frac{k_0}{c},$$
where $k_0$ is the upper bound on the degree of each vertex.

Let $R=\log_{k_0}(\frac{|C|}{2})$. Then the number of elements $(x,y)\in C\times C$ satisfying $d(x,y)\leq R$ is at most $k_0^{R}|C|$. Then,  the uniform measure satisfies
$$\sum_{d(x,y)\geq R}\mu(x,y)\geq \frac{1}{2}.$$

By the definition of the expanders at infinity, for each $r_m$ and each $K=\bigcup^N_{i=1}X_i$ for some $i$, we choose $C=C_{N,m}$.  Then the $R_m$ can be defined as above.

Since $|C_{N,m}|\to \infty$  as $m\to \infty$, it follows that $R_m$ tends to infinity as $m$ goes to infinity. Denote by $\nu$ the normalized restriction of $\mu$ to the complement of the $R_m$-neighborhood of the diagonal. Then
$$\sum_{x,y\in C_{N,m}}\|f(x)-f(y)\|^2\nu(x,y)\leq \frac{k_0}{c}.$$
\end{proof}

	\section{The general setting}
	In this section, we consider the obstructions to the coarse embeddability at infinity into general
	metric space, and introduce a notion of generalized expanders at infinity in  a general setting,  which is an obstruction to the coarse embedding at infinity into certain Banach spaces, i.e. $L^p$-spaces, CAT(0)-spaces, uniformly convex Banach spaces.
	
	Let $S$ be a set and $(Y,d)$ a metric space. If $f:S\to Y$ is a map, then we  consider the pull-back pseudo-metric $$\sigma_f(x,y)=d(f(x),f(y))$$ for all $x,y \in S$. Such pseudo-metrics are called $Y$-metrics on $S$. Let $\mathcal{M}$ be a class of metric spaces. A metric on the set $S$ is an $\mathcal{M}$-metric if it is a $Y$-metric for some $Y \in \mathcal{M}$.
	
	\begin{Def} Let $S$ be a set and $1\leq p <\infty$.
		A class of metrics $\{\sigma_i\}_{i\in I}$ on $S$ is called p-convex if the set of $\{\sigma_i^p\}_{i\in I}$ forms a convex cone of the space of functions on $S\times S$.	
	\end{Def}
We say that an $\mathcal{M}$-metric is a Hilbert metric ($\ell^p$-metric,
CAT(0)-metric) if $\mathcal{M}$ is the class of Hilbert spaces ($\ell^p$
p-metric, CAT(0)-metric). Note that the set of squares of Hilbert metrics forms a convex cone of the space of real-valued functions on $S\times S$.
	\begin{Exa}	
		\begin{enumerate}[(1)]	
			\item  For $p \geq 1$, $L^p$-metrics is p-convex.	
			\item  Let $c > 0$ and $1 < p < \infty$. The class $\mathcal{M}_{c,p}$ of $(c, p)$-uniformly convex Banach	spaces, is the class of uniformly convex Banach spaces whose moduli of convexity satisfy $\delta(t) \geq ct^p$. $\mathcal{M}_{c,p}$-metrics is p-convex.		
			\item  CAT(0)-metrics is $2$-convex.	
		\end{enumerate}
	\end{Exa}
	
	Assume that $(X,d)$ is a metric space. A metric $\sigma$ on $X$ is called coarse if there exist two non-decreasing unbounded functions $\rho_{-}, \rho_+$ such that for all $x,y \in X$, $$\rho_{-}(d(x,y))\leq \sigma(x,y)\leq\rho_{+}(d(x,y)).$$
	\begin{Def}
		Let $Y$ be a metric space.  We say a metric space $X$ adimts a coarse embedding at infinity into $Y$   if if there exist two non-decreasing unbounded functions $\rho_{-}, \rho_+$ such that for every $r>0$, there exists a bounded subset $K_r\subset X$ such that for any $C\subset X/K_r$ with $\mathop{diam}C\leq r$, there exists a  coarse $Y$-metric $\sigma _C$ on $C$ with respect to $\rho_{-}, \rho_+$.
	\end{Def}
	
	For a class of metric spaces $\mathcal{M}$, we may consider $(\mathcal{M},p)$-generalized expanders at infinity.
	
	\begin{Def}
	A metric space $(X,d)$ is called a  $(\mathcal{M},p)$-generalized expander at infinity if there exists a $c>0$,  and two sequences of positive numbers $\{r_m\}_{m \in \mathbb{N}}$ and $\{R_m\}_{m \in \mathbb{N}}$ increasing to infinity, such that for every bounded subset $K\subset X$ and every $m \in \mathbb{N}$, there exists a subset $C_{K,m}\subset X/K$ with $\diam (C_{K,m})\leq r_m$ and a probability measure $\mu$ with support on $C_{K,m}\times C_{K,m}$ such that
	\begin{enumerate}[(1)]
		\item $\mu(x,y)=\mu(y,x);$
		\item $\mu(x,y)=0$ if $d(x,y)\leq R_m;$
		\item for every map $f$ from $C_{K,m}$ to a metric space $(Y,\sigma)\in \mathcal{M}$ satisfying $$\sigma(f(x),f(y))\leq d(x,y)$$ for all $x, y \in C_{K,m}$, the inequality
		$$\sum_{x,y \in C}\sigma(f(x),f(y))^p\mu(x,y)\leq c$$
		holds.
	\end{enumerate}
\end{Def}

	Following the proof of Theorem \ref{main result}, we have the following more general result.
	
	\begin{Thm}
		Let $X$ be a metric space and $\mathcal{M}$ be a class of p-convex metric spaces. Then there exists no coarse embedding  at infinity into any space in $\mathcal{M}$ if and only if $X$ coarsely contains a  $(\mathcal{M},p)$-generalized expanders at infinity.
	\end{Thm}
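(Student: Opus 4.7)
The plan is to follow the proof of Theorem \ref{main result} step by step, replacing squared Hilbert distances by $p$-th powers of $\mathcal{M}$-metrics throughout. The $p$-convexity hypothesis on $\mathcal{M}$ is precisely the axiom needed to make the set of ``realizable'' test functions into a convex cone in the Hahn--Banach step, playing the role that the convex cone of squared Hilbert metrics implicitly played in the Hilbert case.

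For the easy direction, suppose $X$ coarsely contains a $(\mathcal{M},p)$-generalized expander at infinity and, for contradiction, that $X$ admits a coarse embedding at infinity into some $Y\in\mathcal{M}$ with controlling functions $\rho_\pm$. After rescaling we may assume $\rho_+(t)\le t$. For each $m$, taking the bounded set $K$ sufficiently large, the definition of the generalized expander produces a subset $C_{K,m}$ of diameter $\le r_m$ lying outside the ``obstruction set'' for $r_m$ of the coarse embedding at infinity, together with a symmetric probability measure $\mu$ on $C_{K,m}\times C_{K,m}$ supported on $\{d(x,y)>R_m\}$. We thereby obtain an $\mathcal{M}$-metric $\sigma$ on $C_{K,m}$ satisfying $\rho_-(d(x,y))\le \sigma(x,y)\le d(x,y)$. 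Applying condition (3) of the $(\mathcal{M},p)$-generalized expander to $\sigma$ yields $c\ge \sum\sigma(x,y)^p\mu(x,y)\ge \rho_-(R_m)^p$, contradicting $\rho_-(R_m)\to\infty$.

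For the converse, assume $X$ does not coarsely embed at infinity into any $Y\in\mathcal{M}$. I would first prove an $\mathcal{M}$-analogue of Lemma \ref{lemma}: for any prescribed upper controlling function $\rho_+$ and any sequence $R_m\to\infty$, there exist $N\in\IN$ and $r_m\ge R_m$ such that, for each bounded $K\subset X$ and each $m$, one can find $C_{K,m}\subset X\setminus K$ of diameter $\le r_m$ on which every $\mathcal{M}$-metric $\sigma$ dominated by $\rho_+(d(\cdot,\cdot))$ admits a pair $x,y$ with $d(x,y)\ge R_m$ and $\sigma(x,y)\le N$. The proof is contrapositive: the Hilbert-case construction $f=\sum \frac{1}{n}f_n^C$ is replaced by the $p$-convex combination $\sigma^p=\sum n^{-p}\sigma_n^p$, which by the $p$-convexity of $\mathcal{M}$ (together with a standard $\ell^p$-direct sum realization) remains an $\mathcal{M}$-metric, assembling the various $\sigma_n$ into a coarse embedding at infinity of $X$, contradicting the assumption.

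Given the lemma, the Hahn--Banach step of the proof of Theorem \ref{main result} transposes directly. Inside $\ell^2(C_{K,m}\times C_{K,m})$ one considers
\begin{align*}
A_{K,m}&=\{\psi\ge 0 : \psi(x,y)\le \rho_+(d(x,y))^p,\ \psi(x,y)\ge N^p \text{ if } d(x,y)\ge R_m\},\\
B_{K,m}&=\{\sigma^p : \sigma \text{ is an $\mathcal{M}$-metric on } C_{K,m} \text{ with } \sigma(x,y)\le d(x,y)\}.
\end{align*}
Both sets are convex ($A_{K,m}$ obviously, $B_{K,m}$ exactly by the $p$-convexity hypothesis) and disjoint by the lemma, so a separating vector $v$ exists. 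Decomposing $v=v_+-v_-$ and normalizing, $\mu(x,y):=v_+(x,y)/\sum_{d(\cdot,\cdot)\ge R_m}v_+$ gives the required probability measure; testing $v$ against the auxiliary functions $\psi_1$ and $\psi_2$ exactly as in the proof of Theorem \ref{main result} verifies condition (3) of the $(\mathcal{M},p)$-generalized expander. The main obstacle I expect is in the $\mathcal{M}$-analogue of Lemma \ref{lemma}: ensuring that a $p$-convex combination of $\mathcal{M}$-metrics remains an $\mathcal{M}$-metric typically requires a mild closure of $\mathcal{M}$ under appropriate $\ell^p$-direct sums or ultraproducts, a hypothesis satisfied by all the examples (uniformly convex Banach spaces, CAT(0) spaces, $L^p$-spaces) listed in the preceding discussion.
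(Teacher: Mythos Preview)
Your proposal is correct and follows exactly the approach the paper indicates (``Following the proof of Theorem \ref{main result}''): replace squared Hilbert distances by $p$-th powers of $\mathcal{M}$-metrics, use $p$-convexity to make $B_{K,m}$ convex, and rerun the Hahn--Banach separation. One remark: your closing worry about closure of $\mathcal{M}$ under $\ell^p$-direct sums or ultraproducts is unnecessary here, since in the analogue of Lemma \ref{lemma} the sum $\sigma^p=\sum_{R_n\le r}n^{-p}\sigma_n^p$ ranges over the finite set $\{n:R_n\le r\}$, and the hypothesis that $\{\sigma^p:\sigma\text{ an }\mathcal{M}\text{-metric}\}$ is a convex cone already guarantees that such finite positive combinations remain $p$-th powers of $\mathcal{M}$-metrics.
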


	\vskip 1cm
	\begin{itemize}
		\item[] Jintao Deng\\
		Department of Mathematics,
		University of Waterloo, Waterloo N2L3G1, Canada\\
		E-mail: jintao.deng@uwaterloo.ca

		\item[] Liang Guo\\
		Research Center for Operator Algebras, School of Mathematical Sciences, East China Normal University, Shanghai, 200241, P. R. China.\\
		E-mail: 52205500015@stu.ecnu.edu.cn
		
		\item[] Qin Wang \\
		Research Center for Operator Algebras,  and Shanghai Key Laboratory of Pure Mathematics and Mathematical Practice, School of Mathematical Sciences, East China Normal University, Shanghai, 200241, P. R. China. \\
		E-mail: qwang@math.ecnu.edu.cn
		
		\item[] Yazhou Zhang \\
		Research Center for Operator Algebras, School of Mathematical Sciences, East China Normal University, Shanghai, 200241, P. R. China.\\
		E-mail: 52185500010@stu.ecnu.edu.cn

	\end{itemize}
	
\end{document}